%% file: main.tex
\documentclass[a4paper,10pt]{article}
\usepackage[T2A]{fontenc}
\usepackage[utf8]{inputenc}

\usepackage[margin=2.5cm]{geometry}

\usepackage{amsmath, amssymb, amsthm}
\usepackage{hyperref}
\usepackage{xcolor}
\usepackage{color, colortbl}
\usepackage{tikz}
\usepackage{graphicx}

\usepackage[h]{esvect}

\usepackage[shortlabels]{enumitem}

\usepackage{chngcntr}

\makeatletter
\newcommand{\customitem}[1]{%
\item[#1]\protected@edef\@currentlabel{#1}%
}
\makeatother

\usepackage{hhline}
\usepackage{diagbox}
\usepackage{multirow}

\definecolor{myred}{rgb}{0.81, 0.06, 0.13}
\definecolor{myblue}{rgb}{0,0.5,0.6}

\def\sHC{s_k^{\mathrm{HC}}}

\def\SHC{S_k^{\mathrm{HC}}}
\def\SPM{S_k^{\mathrm{PM}}}
\def\SST{S_k^{\mathrm{ST}}}
\def\SHP{S_k^{\mathrm{HP}}}
\def\x{\mathbf{x}}
\def\y{\mathbf{y}}
\def\z{\mathbf{z}}
\def\d{\mathbf{d}}
\def\a{\mathbf{a}}
\def\b{\mathbf{b}}
\def\c{\mathbf{c}}
\def\p{\mathbf{p}}
\def\q{\mathbf{q}}
\def\u{\mathbf{u}}
\def\vv{\mathbf{v}}
\def\m{\mathbf{m}}
\def\fone{\mathbf{1}}
\def\fzero{\mathbf{0}}

\def\R{{\mathbb R}}

\DeclareMathOperator{\diam}{diam}
\DeclareMathOperator{\hB}{hB}
\DeclareMathOperator{\B}{B}

\theoremstyle{plain}
\newtheorem{theorem}{Theorem}[section]
\newtheorem{corollary}[theorem]{Corollary}
\newtheorem{proposition}[theorem]{Proposition}
\newtheorem{lemma}[theorem]{Lemma}

\newtheorem{conjecture}[theorem]{Conjecture}

\theoremstyle{definition}
\newtheorem{definition}[theorem]{Definition}
\newtheorem{example}[theorem]{Example}

\theoremstyle{remark}
\newtheorem{remark}[theorem]{Remark}

\begin{document}

\title{Bollob\'as--Meir TSP Conjecture Holds Asymptotically}
\author{Alexey Gordeev}
\date{}

\maketitle

\begin{abstract}
In 1992, Bollob{\'a}s and Meir showed that for every $k \geq 1$ there exists a constant $c_k$ such that, for any $n$ points in the $k$-dimensional unit cube $[0, 1]^k$, one can find a tour $x_1, \dots, x_n$ through these $n$ points with $\sum_{i = 1}^n |x_i - x_{i + 1}|^k \leq c_k$, where $x_{n + 1} = x_1$ and $|x - y|$ is the Euclidean distance between $x$ and $y$.
Remarkably, this bound does not depend on $n$, the number of points.
They conjectured that the optimal constant is $c_k = 2 \cdot k^{k / 2}$ and showed that it cannot be taken lower than that.
This conjecture was recently revised for $k = 3$ by Balogh, Clemen and Dumitrescu, who showed that $c_3 \geq 2^{7/2} > 2 \cdot 3^{3/2}$.
It remains open for all $k > 2$, with the best known upper bound $c_k \leq 2.65^k \cdot k^{k / 2} \cdot (1 + o_k(1))$.

We significantly narrow the gap between lower and upper bounds on $c_k$, reducing it from exponential to linear.
Specifically, we prove that $c_k \leq  2\mathrm{e}(k + 1) \cdot k^{k / 2}$ and $c_k = k^{k / 2} \cdot (2 + o_k(1))$, the latter establishing the conjecture asymptotically.
We also obtain analogous results for related problems on Hamiltonian paths, spanning trees and perfect matchings in the unit cube.
Our main tool is a new generalization of the ball packing argument used in earlier works.
\end{abstract}

\section{Introduction}
\label{sec:intro}

Throughout the text, $k$ is a positive integer.
For points $\a = (a_1, \dots, a_k), \b = (b_1, \dots, b_k) \in \R^k$ denote by $\langle \a, \b \rangle := \sum_{i = 1}^k a_i b_i$ their dot product, and by $|\a| := \sqrt{\langle \a, \a \rangle}$ the Euclidean length of $\a$.
For a finite set $X \subseteq \R^k$, let $K(X)$ be the complete graph on the vertex set $X$.
We will say that $E$ is a set of edges on $X$ if $E$ is a subset of the set of edges of $K(X)$.
Similarly, we will say that $E$ is a \textit{Hamiltonian cycle on $X$} if it is a Hamiltonian cycle in $K(X)$.
For a set $E$ of edges on $X$ and an edge $e = \a\b \in E$, denote by $|e| := |\a - \b|$ the Euclidean distance between endpoints of $e$, and for a positive integer $t$ define $S_t(E) := \sum_{e \in E} |e|^t$.
The following theorem is due to Newman~\cite[Problem 57]{newmanProblemSeminar1982}.

\begin{theorem}[Newman]\label{thm:Newman}
For any finite set $X \subseteq [0, 1]^2$ of points in the unit square,
there exists a Hamiltonian cycle $H$ on $X$ with
\[
S_2(H) = \sum_{e \in H} |e|^2 \leq 4.
\]
\end{theorem}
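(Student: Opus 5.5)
We argue by recursive subdivision, proving a stronger statement about right isosceles triangles that carries both endpoints of a Hamiltonian path.

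\textbf{Strengthened claim.} Let $T$ be a right isosceles triangle with hypotenuse endpoints $\a, \b$, so its legs have length $\ell$ and its hypotenuse has length $\sqrt{2}\,\ell$. Let $X \subseteq T$ be finite with $|X| \ge 2$. Then there is a Hamiltonian path on $X$, starting at a point of $X$ and ending at a point of $X$, such that
\[
|\a - x_{\mathrm{first}}|^2 + S_2(\text{path}) + |x_{\mathrm{last}} - \b|^2 \le 2\ell^2 .
\]
Here $2\ell^2$ is the square of the hypotenuse, and the bound is tight when all points lie at the right-angle vertex. We handle $|X| \le 1$ separately, with the convention that the quantity is $|\a - x|^2 + |x - \b|^2$, which is at most $2\ell^2$ because the angle at $x$ is at least $90^\circ$.

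\textbf{Deducing the theorem.} Split $[0,1]^2$ along a diagonal into two right isosceles triangles with legs $1$. Both share the hypotenuse, running from $\a = (0,0)$ to $\b = (1,1)$. Apply the claim in each triangle, orienting the first from $\a$ to $\b$ and the second from $\b$ to $\a$. The total bound is $2 + 2 = 4$. Now glue the two paths into a cycle: the two connecting edges are last-of-first to first-of-second, and last-of-second to first-of-first. By the law of cosines, if the angle at $\b$ between $x$ and $y$ is at least $90^\circ$, then $|x - y|^2 \le |x - \b|^2 + |\b - y|^2$. The angle at $\b$ between points in opposite triangles is at most $90^\circ$, so this fails in general. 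The gluing must therefore use the angle at the shared corner differently. The fix is to apply the claim directly with the four corners and split the square into four triangles via both diagonals, from the center. Alternatively, and more simply, we handle the square by the standard Newman approach: prove the triangle lemma with bound $2\ell^2$ for paths from $\a$ to $\b$ where the endpoint costs are replaced as above. Then cut the square into two triangles whose hypotenuse endpoints are adjacent corners, so the two triangles meet at the center with a right angle. Chain the triangles so that the shared "virtual" vertices are the right-angle vertices, where obtuse-angle gluing does apply. We expect to spend effort choosing a decomposition where every gluing vertex sees the two adjacent pieces at an angle of at least $90^\circ$. Splitting around the right-angle vertex is exactly such a situation.

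\textbf{Proof of the claim by induction on $|X|$.} Drop the altitude from the right-angle vertex $\c$ to the hypotenuse. This splits $T$ into two right isosceles triangles $T_1, T_2$ with legs $\ell/\sqrt{2}$. Their hypotenuses run from $\a$ to $\c$ and from $\c$ to $\b$. If both contain points, apply induction to get paths in each, with bounds $\ell^2$ each, totaling $2\ell^2$. Concatenate them, replacing $|x - \c|^2 + |\c - y|^2$ by $|x - y|^2$, where $x$ is the last point of the first path and $y$ the first of the second. This is valid because $x \in T_1$ and $y \in T_2$ lie on opposite sides of the altitude, so the angle $x\c y$ is at least $90^\circ$ and the law of cosines gives the needed inequality. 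If one half is empty, recurse on the other half with its own endpoints. The replacement of $\a$ or $\b$ by $\c$ is again controlled by obtuse angles: for $x \in T_1$, we have $|\a - x|^2 + (\text{stuff}) \le \ldots$. This follows by iterating the subdivision until points separate, and since $X$ is finite, repeated halving eventually separates the points.

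\textbf{Main obstacle.} The hard step is the degenerate case where all points lie in one sub-triangle, and the subsequent final gluing. We would resolve it by a separate one-point base case, and by noting that moving an endpoint target from $\c$ to $\b$ only costs $|\c - \b|^2 = \ell^2$, which is exactly what the empty half would have been allotted.
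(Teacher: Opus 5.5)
\textbf{Verdict: genuine gap, caused by a reversed inequality.} The paper does not prove this theorem; it is quoted from Newman's problem seminar, so I am judging your argument on its own. Your architecture is the standard one and is sound:
\begin{enumerate}[(1)]
\item the right-isosceles-triangle lemma with cost $|\a-x_{\mathrm{first}}|^2+S_2+|x_{\mathrm{last}}-\b|^2\le|\a-\b|^2$;
\item bisection by the altitude from $\c$;
\item a diagonal split of the square.
\end{enumerate}
Your base case, which uses the angle at $x$ opposite the hypotenuse, is also correct.

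The problem is the law of cosines. Since $|x-y|^2=|x-\b|^2+|\b-y|^2-2\langle x-\b,\,y-\b\rangle$, the shortcut $|x-y|^2\le|x-\b|^2+|\b-y|^2$ holds exactly when $\angle x\b y\le 90^\circ$, not when it is $\ge 90^\circ$ as you state.

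\textbf{Consequence for the square.} You reject the diagonal gluing precisely where it works. For $\b=(1,1)$, both $x-\b$ and $y-\b$ have nonpositive coordinates, and for $\a=(0,0)$ the corresponding vectors have nonnegative ones. So both corner shortcuts are valid, and your first construction already gives $S_2(H)\le 2+2=4$. As written, however, the proposal never deduces the theorem. The replacements you sketch are either not decompositions of the square (two triangles on adjacent sides meeting at the center cover only half of it), or they are chosen by the reversed criterion ``angle at least $90^\circ$''.

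\textbf{Consequence for the induction step.} For $x\in T_1$ and $y\in T_2$, the angle $\angle x\c y$ is \emph{at most} $90^\circ$, not at least. Both points lie inside the right angle at $\c$, and the angle is tiny when both are near the foot of the altitude. That is exactly why $|x-y|^2\le|x-\c|^2+|\c-y|^2$ holds. Your two errors cancel here, but the stated justification is false.

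\textbf{Degenerate case.} This needs the same repair plus some structure.
\begin{itemize}
\item When $X\subseteq T_1$, moving the endpoint target from $\c$ to $\b$ costs at most $|\c-\b|^2=\ell^2$ because $|x-\b|^2\le|x-\c|^2+|\c-\b|^2$, which follows from $\angle x\c\b\le 90^\circ$. It is not an ``obtuse angle'' fact. The plain triangle inequality would only give the weaker bound $(|x-\c|+|\c-\b|)^2$.
\item To make the recursion well founded when $|X|\ge 2$, descend through the altitude bisections while all of $X$ stays in one half, assigning points on the altitude to one side. This stops after finitely many steps, since diameters shrink by a factor $1/\sqrt2$ and the points are distinct. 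At the level where $X$ splits, apply induction on $|X|$ to both halves and glue at the right-angle vertex. Then climb back up using the inequality above.
\item State the lemma also for $|X|\le1$, including $X=\emptyset$ with cost $|\a-\b|^2$, because a diagonal half of the square may contain no points. In that case, close the cycle by shortcutting first at $\b$ and then at $\a$.
\end{itemize}
With these corrections your proof is complete.
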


Notably, this bound does not depend on the size of the set $X$.
Point sets in Figure~\ref{fig:exNewman} show that the constant 4 is best possible.
In 1987, Meir~\cite[Remark~(ii)]{meirGeometricProblemInvolving1987} asked for a higher-dimensional analogue of Theorem~\ref{thm:Newman}, which was given a few years later by Bollob{\'a}s and Meir~\cite[Theorem~3]{bollobasTravellingSalesmanProblem1992}.

\begin{figure}[!ht]
\begin{center}
\input{Figures/exNewman}
\end{center}
\caption{Point sets attaining the upper bound in Theorem~\ref{thm:Newman}.}
\label{fig:exNewman}
\end{figure}
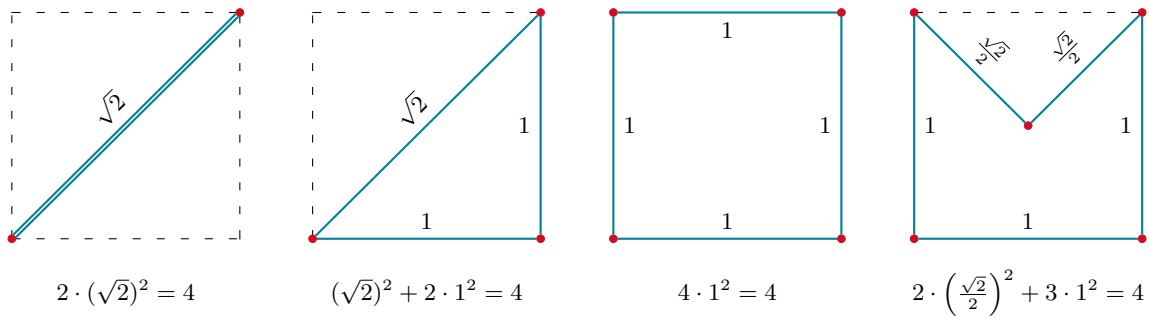

\begin{theorem}[Bollob{\'a}s and Meir]\label{thm:BM}
For any finite set $X \subseteq [0, 1]^k$ of points in the $k$-dimensional unit cube, there exists a Hamiltonian cycle $H$ on $X$ with
\[
S_k(H) = \sum_{e \in H} |e|^k \leq \frac{2}{3} \cdot 9^k \cdot k^{k / 2}.
\]
\end{theorem}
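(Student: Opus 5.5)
Throughout, $\diam Y$ denotes the diameter of a set $Y$. The bound $\frac{2}{3}\cdot 9^k\cdot k^{k/2}$ suggests the classical route: recursive subdivision of the cube combined with a packing/volume argument. I would prove a slightly stronger, scale-invariant statement by induction on $|X|$. For a finite set $X$ contained in a cube $Q$ of side $s$, and any two distinct points $\a,\b\in X$, there is a Hamiltonian path on $X$ from $\a$ to $\b$ with $S_k \le C\cdot s^k$, where $C=\frac{2}{3}\cdot 9^k k^{k/2}$ or a comparable constant. Closing the path with the edge $\a\b$ costs at most $(\sqrt{k}\,s)^k = k^{k/2}s^k$. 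That term can be absorbed if the path statement is proved with constant $C - k^{k/2}$, or if the cycle version is proved directly by choosing the closing edge to be a short one.

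\textbf{Step 1: subdivision.} Split $Q$ into $3^k$ subcubes of side $s/3$. The factor $3$ rather than $2$ lets a path enter and exit each subcube through prescribed points while keeping consecutive subcubes adjacent. Order the nonempty subcubes along a Hamiltonian path of the grid graph $\{1,2,3\}^k$; a boustrophedon (snake) order works, with consecutive cells sharing a facet. In each nonempty subcube $Q_j$, pick an entry point and an exit point from $X\cap Q_j$. Apply induction inside each subcube to get paths $P_j$ with $S_k(P_j)\le C(s/3)^k$. Join them by connecting edges; each connecting edge has length at most $\diam(Q_j\cup Q_{j'})\le \sqrt{k+3}\,s/3$, or at most about $2\sqrt{k}\,s/3$ if empty cells are skipped.

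\textbf{Step 2: the problematic bookkeeping.} The naive sum gives $3^k\cdot C(s/3)^k = C s^k$ plus connecting costs, so the recursion does not close; the total is not geometrically decreasing. Two remedies are available. The first is to stop recursing into a subcube when it contains few points; Bollob\'as--Meir use a volume bound. The second, which I would try first, is to weight the induction by occupied volume. Prove $S_k(P)\le C\cdot \mu(X)$, where $\mu(X)$ is the volume of the union of suitable cubes (or balls) around the points of $X$ of radius comparable to nearest-neighbour distances. The recursion is then additive over subcubes, and the connecting edges are charged to disjoint regions: the edge joining $Q_j$ to $Q_{j+1}$ has length at most a constant times the size of a region containing no points, or it is short enough to be charged to its endpoints' neighbourhoods.

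\textbf{Step 3: packing/charging.} The main obstacle is exactly this charging. Each edge $e$ of the final tour should be assigned a ball $B_e$ of radius $c|e|$ (with $c$ around $1/9$ or $1/(2\cdot 9)$, accounting for the $9^k$). The balls $B_e$ must be pairwise disjoint and contained in an $O(1)$-enlargement of $[0,1]^k$. Then the volume inequality
\[
\sum_e \omega_k c^k|e|^k \le \mathrm{vol}\bigl(\text{enlarged cube}\bigr)
\]
yields a bound of the shape $\mathrm{const}^k k^{k/2}$ after using $\omega_k^{-1}\approx (k/(2\pi e))^{k/2}$. Alternatively, bounding via cubes rather than balls gives the $k^{k/2}$ directly from $\diam Q=\sqrt{k}\,s$. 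Disjointness should come from the hierarchical structure: an edge created at level $\ell$ has length $\lesssim \sqrt{k}\,3^{-\ell}$, and it is created only when its two subcubes are joined. So I would assign to it a level-$\ell$ subcube that is entirely used up at that level, namely one not containing any deeper edge's charged region. Careful accounting over the $3^k$ children, where at most $3^k$ connecting edges per parent cost at most $(\sqrt{k}\,s\cdot\text{const}/3)^k$ each, should give a geometric series summing to at most $\frac{2}{3}\cdot 9^k k^{k/2}$. Verifying that the constants actually land at $\frac{2}{3}\cdot 9^k$ is where I expect to spend the effort.
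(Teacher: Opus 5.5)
\textbf{There is a genuine gap: the step you defer cannot be closed by better bookkeeping, because your construction can produce tours of unbounded cost.}

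You note yourself that the induction $S_k\le Cs^k$ cannot close, since the children alone use the whole budget $3^k\cdot C(s/3)^k=Cs^k$. The rescue in Step~3 (disjoint balls of radius $c|e|$, charged to ``used-up'' subcubes) is only asserted.

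You leave the entry and exit points, i.e.\ the orientation of each sub-path, unspecified. Take the natural choice in which every subcube at every level is traversed by the same boustrophedon order, from its $(0,\dots,0)$-corner to its $(2,\dots,2)$-corner. Let $X$ be the set of centres of all $3^{kL}$ subcubes of level $L$. For $\ell\le L-2$, consider the $3^{k\ell}(3^k-1)$ connecting edges between consecutive children of level-$\ell$ cubes. Each joins the top-corner point of one child to the bottom-corner point of a facet-neighbour, so its length is at least $\sqrt{k-1}\,\bigl(3^{-(\ell+1)}-3^{-L}\bigr)\ge\frac{2}{3}\sqrt{k-1}\,3^{-(\ell+1)}$. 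Hence every such level contributes at least $\bigl(\frac{2}{3}\bigr)^k(1-3^{-k})(k-1)^{k/2}$, and for $k\ge 2$ the total grows linearly in $L$. No charging scheme can bound this.

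The prescribed endpoints $\a,\b$ in your induction cause a further problem. A fixed snake order, in which each subcube's points form one contiguous block, cannot handle, e.g., $\a$ and $\b$ lying in the same subcube.

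The missing idea is that packings like the one you want in Step~3 come from \emph{optimality} of the edge set via exchange arguments, as in Lemma~\ref{lm:match-key}. A tour assembled from a fixed subdivision has no such property.

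\textbf{How the paper proves it.} The paper derives the theorem from the two inequalities in~\eqref{eq:approx+ball}, and the constant splits as $\frac{2}{3}\cdot 9^k=\bigl(2\cdot 3^{k-1}\bigr)\cdot 3^k$; it does not come from balls of radius $|e|/9$.

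Let $T$ be a minimum spanning tree of $X$, and take two of its edges $e=\a\b$, $f=\c\d$. By the cut property you can label the endpoints so that $|\b-\d|\ge\max(|e|,|f|)$, $|\a-\d|\ge|f|$ and $|\b-\c|\ge|e|$. Lemma~\ref{lm:bimedian} then gives $|\m_e-\m_f|\ge\max(|e|,|f|)/2\ge(|e|+|f|)/4$. So the balls $\B_{1/4}^e$, $e\in T$, are pairwise disjoint and lie in the ball of radius $\frac{3}{4}\sqrt{k}$ about the centre of the cube, whence $S_k(T)\le 3^k k^{k/2}$.

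Next, Sekanina's argument gives a Hamiltonian cycle $H$ in the cube $T^3$. Each edge of $H$ shortcuts a path of at most three tree edges, and each tree edge is shortcut at most twice. The power mean inequality then yields $S_k(H)\le 2\cdot 3^{k-1}S_k(T)$.

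\textbf{Salvaging the subdivision idea.} The version that works orders $X$ by parameter along a face-continuous space-filling curve $\gamma$. An example is the $k$-dimensional Peano curve with reflected sub-curves, where consecutive cells at every level share a facet. It uses the H{\"o}lder estimate $|\gamma(s)-\gamma(t)|^k\le 3^k(k+3)^{k/2}|s-t|$, which telescopes over consecutive points. This gives $S_k(H)\le 3^k(k+3)^{k/2}+k^{k/2}$, within the claimed bound for $k\ge 2$ (the case $k=1$ is trivial). That is an argument about disjoint parameter intervals, not the ball charging you sketch.
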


Similar to Theorem~\ref{thm:Newman}, the upper bound in Theorem~\ref{thm:BM} does not depend on the number of points in $X$, only on the dimension $k$.
For a finite set $X \subseteq \R^k$, let $\SHC(X) := \min_H S_k(H)$, where the minimum is over all Hamiltonian cycles $H$ on $X$, and let
\[
\SHC := \sup\left\{ \SHC(X) \,:\, X \subseteq [0, 1]^k,\ X \text{ is a finite set} \right\}.
\]
Let $\SHP(X)$ and $\SHP$, $\SST(X)$ and $\SST$, $\SPM(X)$ and $\SPM$ be defined similarly for, respectively, \textit{Hamiltonian paths}, \textit{spanning trees} and \textit{perfect matchings} instead of Hamiltonian cycles, with only even-sized sets $X$ allowed in the definitions of $\SPM(X)$ and $\SPM$.

\begin{remark}
Related works often used notation like $s_k(E) := (S_k(E))^{\frac{1}{k}}$ and $\sHC(X) := (\SHC(X))^{\frac{1}{k}}$ to avoid dealing with exponents in bounds.
We find using $S_k$ directly more convenient, in particular when discussing the new results of this paper.
To keep notation consistent, we state all results, old and new, in terms of $S_k$.
\end{remark}

\begin{example}\label{ex:lower_bound}
Let $X = \{\fzero, \fone\} \subseteq [0, 1]^k$, where $\fzero = (0, \dots, 0)$ and $\fone = (1, \dots, 1)$.
Then
\[
\SHC \geq \SHC(X) = 2 \cdot k^{k / 2},\quad \SHP \geq \SHP(X) = k^{k / 2},\quad \SST \geq \SST(X) = k^{k / 2},\quad \SPM \geq \SPM(X) = k^{k / 2}.
\]
\end{example}

Combining Theorem~\ref{thm:BM} and Example~\ref{ex:lower_bound}, we see that
\begin{equation}\label{eq:BM-bounds}
2 \cdot k^{k / 2} \leq \SHC \leq \frac{2}{3} \cdot 9^k \cdot k^{k / 2}.    
\end{equation}
In the conclusion of their paper~\cite{bollobasTravellingSalesmanProblem1992}, Bollob{\'a}s and Meir conjectured that $\SHC = 2 \cdot k^{k / 2}$ for all $k$.
The case $k = 1$ is trivial, and due to Theorem~\ref{thm:Newman}, this is also true for $k = 2$.
The exact value of $\SHC$ is not known for any $k > 2$.
Recently, Balogh, Clemen and Dumitrescu~\cite[Theorem~1.3(i)]{baloghTravelingSalesmanProblem2024} disproved the conjecture for $k = 3$ by presenting the following example.

\begin{example}[Balogh, Clemen and Dumitrescu]\label{ex:k=3}
Let $X = \{(0,0,0), (0,1,1), (1,0,1), (1,1,0)\} \subseteq [0, 1]^3$. Then
\[
S^{\mathrm{HC}}_3 \geq S^{\mathrm{HC}}_3(X) = 4 \cdot 2^{3 / 2} > 2 \cdot 3 ^ {3 / 2}.
\]
\end{example}

They proposed an adjusted conjecture~\cite[Conjecture~5.5]{baloghTravelingSalesmanProblem2024}, which remains open for all $k > 2$.

\begin{conjecture}[Bollob{\'a}s--Meir conjecture]\label{conj:BM}
For any finite set of points $X \subseteq [0, 1]^k$, there exists a Hamiltonian cycle $H$ on $X$ with $S_k(H) \leq 2 \cdot k^{k / 2}$ if $k \neq 3$, and $S_3(H) \leq 2^{7/2}$ if $k = 3$.
In other words,
\[
\SHC = \begin{cases}
2 \cdot k^{k / 2} & \text{for } k \neq 3,\\
2^{7/2} & \text{for } k = 3.
\end{cases}
\]
\end{conjecture}

Theorem~\ref{thm:BM} is a direct consequence of two inequalities:
\begin{equation}\label{eq:approx+ball}
\SHC \leq \frac{2}{3} \cdot 3^k \cdot \SST \quad\text{and}\quad \SST \leq 3^k \cdot k^{k / 2}.
\end{equation}

The first inequality follows by approximating a Hamiltonian cycle with a spanning tree, using ideas first developed by Sekanina~\cite{sekaninaOrderingVerticesGraph1963,sekaninaAlgorithmOrderingGraphs1971} in the 60s.
The second inequality is obtained via a ball packing argument: one can show that placing a ball of radius $\frac{|e|}{4}$ on the center of each edge $e$ of the minimal spanning tree yields a collection of disjoint balls, and the inequality follows by bounding their total volume.
Although the upper bound in Theorem~\ref{thm:BM} has since been improved, subsequent works continued to rely on the same combination of ball packing arguments and approximation techniques.

In~\cite[Theorems~3.4 and~5.2]{gordeevChristofidesBasedEUROCOMB2025}, the present author used refined variants of the ball packing argument to obtain the best exact upper bounds to date for $\SST$ and $\SPM$:
\[
\SST \leq 2 \cdot 1.823^k \cdot k^{k / 2} \quad\text{and}\quad \SPM \leq 2 \cdot (\sqrt{2})^k \cdot k^{k / 2}.    
\]
These bounds, together with approximation ideas in the vein of the Christofides algorithm~\cite{christofides1976worst} for the metric travelling salesman problem, led to the best exact upper bound to date for $\SHC$~\cite[Theorem~6.1]{gordeevChristofidesBasedEUROCOMB2025}:
\[
\SHC \leq 1.28 \cdot 5.059^k \cdot k^{k / 2}.
\]

Balogh, Clemen and Dumitrescu~\cite[Theorem~1.5]{baloghTravelingSalesmanProblem2024} established by asymptotic analysis of a certain greedy algorithm (which we will revisit in Section~\ref{sec:asymp}), that for large $k$,
\[
\SST \leq k^{k / 2} \cdot (1 + o_k(1)).
\]
Note that, in view of Example~\ref{ex:lower_bound}, this is asymptotically optimal.
Using approximation, they also obtained an asymptotic upper bound on $\SHC$, which was later improved~\cite[Theorem~6.2]{gordeevChristofidesBasedEUROCOMB2025} to
\[
\SHC \leq 2.65^k \cdot k^{k / 2} \cdot (1 + o_k(1)).
\]

\subsection{New results}
\label{subsec:new}

The main contribution of this work is a new generalization of the ball packing argument.
The key idea is to allow balls to intersect in a controlled manner instead of requiring them to be disjoint.
This not only leads to improved bounds but, more importantly, allows for direct application to Hamiltonian cycles, eliminating the need for approximation.
As a result, we can show that the \nameref{conj:BM} holds asymptotically.

\begin{theorem}\label{thm:main}
For any finite set $X \subseteq [0, 1]^k$, there exists a Hamiltonian cycle $H$ on $X$ such that, if $e_1, e_2 \in H$ are two longest edges of $H$, then
\[
\sum_{e \in H \setminus \{e_1, e_2\}} |e|^k = o_k(k^{k / 2}), \quad\text{so}\quad
S_k(H) = \sum_{e \in H} |e|^k \leq k^{k / 2} \cdot (2 + o_k(1)).
\]
In particular,
\[
\SHC = k^{k / 2} \cdot (2 + o_k(1)).
\]
\end{theorem}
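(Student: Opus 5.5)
We take $H$ to be a Hamiltonian cycle on $X$ minimizing $S_k(H)$; a minimizer exists since there are finitely many cycles. Let $e_1,e_2$ be two longest edges. Both trivially satisfy $|e_i|^k\le (\sqrt k)^k=k^{k/2}$. So the asymptotic value $\SHC=k^{k/2}(2+o_k(1))$ follows from the first display of Theorem~\ref{thm:main} together with Example~\ref{ex:lower_bound}. The whole task is therefore to show that the remaining edges contribute $o_k(k^{k/2})$.

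The tool will be the generalized ball packing announced in the introduction. To each edge $e=\a\b\in H$ we attach a ball $B_e$ centred at its midpoint with radius $r|e|$, for a suitable constant $r$ (possibly near $1/2$, so larger than the classical $1/4$). Local optimality of $H$ under 2-opt moves will give geometric constraints. For two edges $\a\b,\c\d$ of $H$, replacing them by $\a\c,\b\d$ (with the correct orientation) must not decrease $S_k$. This yields
\[
|\a-\c|^k+|\b-\d|^k\ge |\a-\b|^k+|\c-\d|^k .
\]
For large $k$ this is essentially a max-inequality, $\max(|\a-\c|,|\b-\d|)\gtrsim \max(|e|,|f|)$ up to a factor $2^{-1/k}\to1$. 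We will use it to show that, apart from a controlled overlap, the balls are "almost disjoint". Concretely, we aim to prove that every point of $\R^k$ lies in at most $m=m_k$ of the balls $B_e$, with $m_k$ growing subexponentially (for instance polynomially in $k$). Then
\[
\sum_e \mathrm{vol}(B_e)\le m_k\cdot \mathrm{vol}\bigl([-r\sqrt k,1+r\sqrt k]^k\bigr).
\]
Equivalently, one can weight each ball by a density and bound the integral of the total density.

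Next we compare volumes. The volume of a ball of radius $r|e|$ is $\omega_k r^k|e|^k$, where $\omega_k=\pi^{k/2}/\Gamma(k/2+1)\approx (2\pi e/k)^{k/2}/\sqrt{\pi k}$. The enlarged cube has volume at most $(1+2r\sqrt k)^k\approx (2r\sqrt k)^k e^{1/(2r)\cdot\sqrt{}\ldots}$, which is $(2r)^k k^{k/2}$ up to a factor $e^{O(\sqrt k)}$. This naive bound gives $S_k(H)\le m_k (2r)^k k^{k/2}/(\omega_k r^k)$. Since $\omega_k$ decays superexponentially, that bound is far too weak on its own.

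The real gain must come from the observation that only edges of length close to $\sqrt k$ cost order $k^{k/2}$. An edge with $|e|\le (1-\varepsilon)\sqrt k$ contributes at most $(1-\varepsilon)^k k^{k/2}$. So we split edges into long ones ($|e|\ge(1-\varepsilon_k)\sqrt k$) and short ones.

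For the long edges, we use concentration of measure. Any segment of length nearly $\sqrt k$ in $[0,1]^k$ has endpoints nearly antipodal in almost every coordinate. Combined with the 2-opt inequality, this should show that $H$ has at most two such edges (the analogue of Example~\ref{ex:lower_bound}), or that a third one forces a contradiction. Any further long edges contribute $o_k(k^{k/2})$ as $\varepsilon_k\to0$.

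For the short edges, we apply the overlapping-ball packing on dyadic length scales: edges with $|e|\in[2^{-j-1}\sqrt k,2^{-j}\sqrt k)$, summed geometrically. The density of edges at scale $\lambda\sqrt k$ is then bounded by the volume comparison, giving a total contribution at most $\sum_j 2^{-jk}k^{k/2}\cdot(\text{count})$.

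The main obstacle is the middle range, $|e|\in[c\sqrt k,(1-\varepsilon)\sqrt k]$ with $c$ constant. Here the ball-volume bound loses a factor $(C/\sqrt{k})^k$-type against the cube, because the ball of radius $\asymp\sqrt k$ is much smaller than the cube in volume. To beat this we would replace Euclidean balls by sets adapted to the cube, for instance the intersection of the ball with $[0,1]^k$, or a "lens" around the segment. We would then show via the 2-opt inequality that these lens regions have bounded overlap. Their volume is comparable to the cube's times $\exp(-\Theta(k)\cdot g(|e|/\sqrt k))$, so that summing gives $o_k(k^{k/2})$. Making this overlap bound rigorous is the step we expect to be hardest. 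The first attempt would be a charging argument: each point lies in lenses of edges whose directions are pairwise almost orthogonal, and there are at most $O(k)$ such edges. This would yield an overlap factor $m_k=O(k)$, consistent with the linear gap $2e(k+1)$ advertised in the abstract.
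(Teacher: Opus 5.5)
Your proposal has a genuine gap. The decisive estimate is left open, and the steps you treat as routine do not work as stated.

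\textbf{Short edges.} Overlapping-ball volume counting controls only very short edges. Suppose edges of length at most $L$ carry $m$-fold packed balls of radius $r|e|$ inside $[-rL,1+rL]^k$. Then you get $\sum_e|e|^k\le m(1+2rL)^k/(V_kr^k)$. Since $1/V_k\approx\sqrt{\pi k}\,\bigl(\frac{k}{2\pi\mathrm{e}}\bigr)^{k/2}$, this is $o_k(k^{k/2})$ essentially only if $1+2rL<r\sqrt{2\pi\mathrm{e}}$. That forces $L<\sqrt{2\pi\mathrm{e}}/2\approx 2.07$; the paper uses $L=k^{-1/4}$. The half-ball bound of Lemma~\ref{lm:hb-pack} never drops below $2t\cdot k^{k/2}$. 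So the hard range is every length between an absolute constant and $(1-\varepsilon)\sqrt k$, not just $[c\sqrt k,(1-\varepsilon)\sqrt k]$.

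\textbf{Dyadic bins.} These are too coarse. A count of edges in $[\lambda\sqrt k/2,\lambda\sqrt k)$ obtained from separation or volume at the lower end of the bin is at best of order $(2\cdot 0.999/\lambda)^k$. Multiplying by $(\lambda\sqrt k)^k$ then loses a factor of about $2^k$, so bins of ratio $1+\frac1k$ are needed.

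\textbf{Middle range.} You give no argument here, and the suggested mechanism is false. $\R^k$ contains $\mathrm{e}^{\Theta(\varepsilon^2k)}$ unit vectors with pairwise $|\langle u,v\rangle|\le\varepsilon$; only pairwise \emph{obtuse} families are bounded by $k+1$ (Lemma~\ref{lm:sph-code-bound}). Nor do you derive any overlap bound from $S_k$-local optimality. The identity that turns exchange inequalities into midpoint separation (Lemma~\ref{lm:bimedian}) is quadratic. That is why the paper works with $S_2$-minimal objects, and for cycles with four-edge exchanges, since two edges of an optimal cycle can have almost coinciding midpoints.

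\textbf{Long edges.} The bound of at most two edges of length at least $(1-\varepsilon_k)\sqrt k$ is only asserted (``should show'').

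The missing idea is that the paper does not use an optimal tour for the medium edges at all. It proves the path version (Theorem~\ref{thm:HP-asymp}) and closes the path with one extra edge.
\begin{enumerate}[(1)]
\item Among $S_2$-minimal path families $E_i$, it stops at the first $i_0$ with $S_2(E_{i_0+1})-S_2(E_{i_0})>1/\sqrt k$. Then every edge of $E_{i_0}$ has length at most $k^{-1/4}$, and endpoints of different paths are more than $k^{-1/4}$ apart. These short edges are handled by the $3$-fold packing of Proposition~\ref{prop:PF-4-edges}, giving $0.69^k k^{k/2}$.
\item It completes $E_{i_0}$ greedily, always adding the shortest edge between endpoints of different paths. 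This rule supplies the structure your approach lacks. Once all edges of length at most $a_i\sqrt k$ are placed, one endpoint from each current path forms an $a_i\sqrt k$-separated set $X_i$, and fewer than $|X_i|$ edges remain.
\item Lemma~\ref{lm:0.999} gives $|X_i|a_i^k\le 0.999^k$; this is the kind of cube-adapted volume estimate your lens idea is reaching for.
\item With $a_{i+1}=(1+\frac1k)a_i$, each of the $O(k\log k)$ scales costs at most $\mathrm{e}\cdot 0.999^k k^{k/2}$.
\item Beyond $0.9\sqrt k>\sqrt{2k/3}$, Lemma~\ref{lm:3-points} leaves at most two paths. Hence at most one long edge remains in the path, and two in the closed cycle.
\end{enumerate}
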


Theorem~\ref{thm:main} is a direct corollary of the following result for Hamiltonian paths, which asserts that Conjecture~5.6 in~\cite{baloghTravelingSalesmanProblem2024}, the analogue of the \nameref{conj:BM} for Hamiltonian paths, also holds asymptotically.

\begin{theorem}\label{thm:HP-asymp}
For any finite set $X \subseteq [0, 1]^k$, there exists a Hamiltonian path $P$ on $X$ such that, if $e_1 \in P$ is the longest edge of $P$, then
\[
\sum_{e \in P \setminus \{e_1\}} |e|^k = o_k(k^{k / 2}), \quad\text{so}\quad
S_k(P) = \sum_{e \in P} |e|^k \leq k^{k / 2} \cdot (1 + o_k(1)).
\]
In particular,
\[
\SHP = k^{k / 2} \cdot (1 + o_k(1)).
\]
\end{theorem}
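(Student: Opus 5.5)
Our plan is to build the path by a greedy merging procedure and to control every edge except one with a generalized ball-packing bound. Start with $|X|$ trivial paths, one per point. At each step, merge two current paths by joining endpoints at the shortest available distance; this is a Kruskal-type procedure restricted to endpoints. At the end we obtain a Hamiltonian path $P$ whose edges are listed in insertion order $f_1, \dots, f_{n-1}$.

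The key estimate is that, apart from the longest edge, $\sum |f_i|^k = o_k(k^{k/2})$. We attach to each edge $f$ a ball $B_f$ of radius $r|f|$ centred near the midpoint of $f$, or near the endpoint region it serves. The usual packing argument would require these balls to be pairwise disjoint. Instead, we allow them to overlap but show that every point of space lies in at most $m$ of them, with $m$ polynomial (or merely subexponential) in $k$. This overlap bound comes from the greedy rule: when $f$ was inserted, its endpoints were at distance at least $|f|$ from all other current endpoints. All the balls lie in a slightly enlarged cube $[-\delta, 1+\delta]^k$. Comparing volumes then gives
\[
\sum_f \omega_k r^k |f|^k \le m(1+2\delta)^k .
\]
Since $\omega_k \approx (2\pi e/k)^{k/2}/\sqrt{\pi k}$, we obtain $\sum_f |f|^k \le m(1+2\delta)^k r^{-k} (k/2\pi e)^{k/2}\sqrt{\pi k}$. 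This is $o(k^{k/2})$ as long as $r$ is a fixed constant with $r^2 > (1+2\delta)^2/(2\pi e)$, for instance $r \ge 1/4$ with $\delta$ small. The exponential factor $(2\pi e)^{-k/2}$ absorbs any subexponential multiplicity.

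The one edge this argument cannot handle is an edge whose length is comparable to $\sqrt{k}$, that is, close to the diameter of the cube. For such an edge the ball sticks far out of the cube. We therefore split the edges by length. For edges with $|f| \le \varepsilon\sqrt{k}$, the balls of radius $r|f|$ stay inside $[-\varepsilon\sqrt{k}r, 1+\varepsilon\sqrt{k}r]^k$. The bound above then costs a factor $(1+2r\varepsilon\sqrt{k})^k$, which is too large. So instead we bound the volume of a ball intersected with the cube, using the fact that a ball centred in the cube keeps at least a $2^{-k}$ fraction of its volume inside the cube. This yields $\sum |f|^k \le m\,2^k r^{-k}\omega_k^{-1}$, and it is $o(k^{k/2})$ for any constant $r > \sqrt{2/(\pi e)}$. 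For the long edges, with $|f| > \varepsilon\sqrt{k}$, we show there are few of them. Two long edges inserted by the greedy rule force pairwise-far endpoints, so a spherical-code or volume argument bounds their count. We also show that all long edges except the last one have length at most $(1-\eta)\sqrt{k}$, or that their $k$-th powers sum to $o(k^{k/2})$. Concretely, we intend to prove that the second-longest edge $f$ satisfies $|f|^2 \le (1/2+o(1))k$. The reason is that its endpoints and the endpoints of the longest edge are, at the time of insertion, pairwise at distance at least $|f|$ among at least three points in the cube. Three points pairwise at distance at least $d$ in $[0,1]^k$ force $d^2 \le 2k/3$, by the average squared distance bound. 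Hence $|f|^k \le (2/3)^{k/2}k^{k/2} = o(k^{k/2})$.

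The main obstacle is making the overlap multiplicity $m$ genuinely subexponential while keeping $r$ a constant, and controlling edges of intermediate length. For this we first try charging overlaps via the greedy property: balls of comparable radius are centred at points with pairwise separation, which gives a kissing-number-type bound. That bound is exponential, so it must be balanced against the $(2\pi e)^{k/2}$ slack by a careful choice of $r$ and of dyadic length scales. Once Theorem~\ref{thm:HP-asymp} is established, $\SHP \le k^{k/2}(1+o_k(1))$ follows because the single longest edge has length at most $\sqrt{k}$. Combined with Example~\ref{ex:lower_bound}, this gives equality asymptotically.
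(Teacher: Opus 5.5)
\textbf{There is a genuine gap, and it sits at your central step.} You claim that for the greedy endpoint-merging path, the balls $\B_r^f$ (radius $r|f|$ about the midpoints) cover every point at most $m$ times, with $m$ subexponential in $k$. This is false for every $r>0$, even when all points lie in a $3$-dimensional slice of the cube.

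Here is a counterexample. Let $\c$ be the centre of the cube and take generic unit vectors $w_1,\dots,w_N$ in a $3$-dimensional coordinate subspace. Set $L_i=10^{i-N-1}$, $\u_i=\c+\frac{L_i}{2}w_i$ and $\vv_i=\c-\frac{L_i}{2}w_i$. Add fine chains of points (spacing below $L_1$ and below the distance between different segments) along the segments $[\vv_i,\u_{i+1}]$ for $1\le i<N$. Add two more chains joining $\u_1$ to $\q$ and $\vv_N$ to $\z$, where $\q,\z$ are at distance $0.3$ from $\c$ on opposite sides.

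The greedy first assembles the chains. Afterwards, once $\u_i$ is an endpoint of the growing path, the unique shortest pair of endpoints in different paths is $\u_i\vv_i$. Every other candidate is one of the following:
\begin{enumerate}[(i)]
\item $\u_j\vv_j$ with $j>i$, of length $L_j\geq 10L_i$;
\item a pair from $\{\u_j,\vv_j\}\times\{\u_l,\vv_l\}$ with $j\neq l$ both at least $i$, of length at least $|L_j-L_l|/2\geq 4.5L_i$;
\item a pair involving $\q$ or $\z$, of length at least $0.25>L_i$.
\end{enumerate}
So the greedy inserts $\u_1\vv_1,\dots,\u_N\vv_N$ in this order. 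All of these edges have midpoint $\c$, so $\c$ lies in $N$ of your balls, with $N$ arbitrary.

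Working one length scale at a time does not rescue the volume argument. Each scale then contributes a fixed amount, and the number of scales is unbounded.

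This is why the paper never uses the greedy for short edges. It starts instead from a path family $E_{i_0}$ that minimizes $S_2$ among path families of its size, where $i_0$ is the first size at which $S_2(E_{i+1})-S_2(E_i)>1/\sqrt{k}$. This threshold does two things:
\begin{enumerate}[(i)]
\item every edge of $E_{i_0}$ has length at most $k^{-1/4}$;
\item endpoints of different paths are more than $k^{-1/4}$ apart, so the greedy continuation adds only longer edges.
\end{enumerate}
The four-edge exchange with the bimedian formula (Proposition~\ref{prop:PF-4-edges}) then makes $\{\B^e_{1/\sqrt{8}}\}_{e\in E_{i_0}}$ a $3$-fold packing. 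Because these edges are short, the enlarged cube costs only a factor $(1+2k^{-1/4})^k$, which gives $\sum_{e\in E_{i_0}}|e|^k\le 0.69^k k^{k/2}$.

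\textbf{Your treatment of edges longer than $\varepsilon\sqrt{k}$ is also insufficient.} Separation of endpoints only bounds their number by about $(0.999/\varepsilon)^k$ (Lemma~\ref{lm:0.999}), which is exponential in $k$. So the bound $|f|^k\le(2/3)^{k/2}k^{k/2}$ on each non-longest edge does not make their sum $o(k^{k/2})$. Using dyadic scales instead would cost a factor $2^k$.

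The paper handles all edges with lengths between $k^{-1/4}$ and $0.9\sqrt{k}$ by counting rather than packing. Set $a_i=(1+\frac{1}{k})^i k^{-3/4}$ for $0\le i\le\ell=O(k\log k)$.
\begin{enumerate}[(i)]
\item The greedy edges with length in $(a_i\sqrt{k},a_{i+1}\sqrt{k}]$ are fewer than the paths of $P_i$.
\item The chosen endpoints of those paths are pairwise more than $a_i\sqrt{k}$ apart, so Lemma~\ref{lm:0.999} bounds their number by $0.999^k a_i^{-k}$.
\item Hence each layer contributes at most $(1+\frac{1}{k})^k\,0.999^k k^{k/2}\le\mathrm{e}\cdot 0.999^k k^{k/2}$. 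The ratio $1+\frac{1}{k}$ costs only a factor $\mathrm{e}$.
\item Summing over the $O(k\log k)$ layers still gives $o(k^{k/2})$.
\end{enumerate}
Finally, Lemma~\ref{lm:3-points} shows that at most one edge is longer than $0.9\sqrt{k}>\sqrt{2k/3}$, and this is the single exceptional edge.
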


\begin{proof}[Proof of Theorem~\ref{thm:main} using Theorem~\ref{thm:HP-asymp}]
For a finite set $X \subseteq [0, 1]^k$, let the Hamiltonian path $P$ from Theorem~\ref{thm:HP-asymp} have endpoints $\a, \b \in X$.
Then $H := P \cup \{\a\b\}$ is the desired Hamiltonian cycle.
\end{proof}

Informally speaking, Theorems~\ref{thm:main} and~\ref{thm:HP-asymp} imply that for large $k$ the points of an extremal set $X \subseteq [0, 1]^k$ with $\SHC(X) = k^{k / 2} \cdot (2 + o_k(1))$ (or $\SHP(X) = k^{k / 2} \cdot (1 + o_k(1))$) must be clustered around two opposite vertices of the unit cube $[0, 1]^k$.

Turning to exact bounds, we prove the following theorem, bringing the upper bound on $\SHC$ within a linear factor of the lower bound of Example~\ref{ex:lower_bound}.

\begin{theorem}\label{thm:HC-exact}
For any finite set $X \subseteq [0, 1]^k$, there exists a Hamiltonian cycle $H$ on $X$ such that
\begin{enumerate}[(a)]
\item\label{HC-exact:t} for any integer $t \geq 1$,  $S_k(H) = \sum_{e \in H} |e|^k \leq 2(2t + 1) \cdot \left( 1 + \frac{1}{t} \right)^{k / 2} \cdot k^{k / 2}$;
\item\label{HC-exact:centroid} $S_k(H) \leq 2 \mathrm{e} (k + 1) \cdot k^{k / 2}$, where $\mathrm{e} = 2.718..$ is Euler's number.
\end{enumerate}
In particular,
\[
\SHC \leq 2\mathrm{e}(k + 1) \cdot k^{k / 2}.
\]
\end{theorem}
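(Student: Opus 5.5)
The plan is to prove part (a) with a weighted ball-packing argument in which the balls may overlap, and then derive part (b) by choosing $t \approx k/2$. Note that
\[
2(2t+1)\left(1+\tfrac1t\right)^{k/2}\Big|_{t=k/2} = 2(k+1)\left(1+\tfrac2k\right)^{k/2} \le 2\mathrm{e}(k+1).
\]
So (b) follows directly from (a) when $k$ is even. For odd $k$ the optimal $t$ is a half-integer, and I expect to need a slightly modified ("centroid") version of the same argument that realizes the non-integer parameter $t=k/2$ directly.

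\textbf{Choice of the cycle.} Take $H$ to be a Hamiltonian cycle on $X$ that minimizes $S_k(H)$; it exists since $X$ is finite. The only property I will use is local optimality under $2$-opt moves. For any two edges $\a\b$ and $\c\d$ of $H$, traversed in the same orientation,
\[
|\a-\b|^k+|\c-\d|^k \le |\a-\c|^k+|\b-\d|^k .
\]
I may also use the analogous inequalities for reversing longer segments, which involve up to $2t+1$ edges at once; this is where the integer $t$ enters.

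\textbf{Regions and controlled overlap.} To each edge $e=\a\b$ assign a region $R_e$ of Thales type:
\[
R_e=\{\z : \langle \a-\z,\b-\z\rangle \le -\tfrac{1}{4t}|e|^2\},
\]
which is a ball concentric with the ball having diameter $e$ but with radius shrunk by the factor $(1+1/t)^{-1/2}$. Two facts are needed.

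First, a volume/measure statement. Under the right normalization, $R_e$ should capture a share of at least $\bigl(|e|^2/((1+1/t)k)\bigr)^{k/2}$ of a fixed reference measure supported near the cube. The model case is the main diagonal from $\fzero$ to $\fone$: the ball with that diagonal as diameter contains the whole cube, so this normalization is tight in Example~\ref{ex:lower_bound}.

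Second, a multiplicity bound. Using the $2$-opt inequalities, show that every point $\z$ lies in at most $2(2t+1)$ of the regions $R_e$. The mechanism should be this: if $\z$ lies in many regions, the corresponding endpoints "surround" $\z$, with pairwise obtuse angles at $\z$ bounded away from $90^\circ$ by the margin $\frac1{4t}$. Some reconnection among a block of $2t+1$ of these edges would then strictly decrease $S_k(H)$, contradicting minimality. The leading factor $2$ should come from the two edge orientations, or equivalently from the cycle versus path distinction. Summing the measure bound over all edges and using the multiplicity bound then gives
\[
S_k(H)\le 2(2t+1)\left(1+\tfrac1t\right)^{k/2}k^{k/2}.
\]

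\textbf{Main obstacle.} The hard step is the multiplicity bound. I need to turn the purely metric exchange inequalities into a geometric covering statement, and this must hold for the $k$-th power cost, not just squared lengths. My first attempt would be to convert $|\cdot|^k$ exchange inequalities into inner-product inequalities at $\z$. I would do this by expanding $|\a-\c|^2=|\a-\z|^2+|\c-\z|^2-2\langle \a-\z,\c-\z\rangle$ and using convexity of $s\mapsto s^{k/2}$. Then I would apply a Ramsey or pigeonhole argument on the signs of the inner products among the endpoints of the edges whose regions contain $\z$.

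The second delicate point is choosing the reference measure so that the volume bound holds uniformly even for edges near the boundary of the cube. The natural candidate is uniform measure on the cube, weighted toward the centroid; this weighting is presumably also the device that yields the continuous parameter needed for (b) when $k$ is odd.
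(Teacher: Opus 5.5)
\textbf{There is a genuine gap at the multiplicity bound, and the route you sketch for it does not work.}

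An $S_k$-minimal cycle only gives exchange inequalities between sums of $k$-th powers. Convexity of $s\mapsto s^{k/2}$ runs the wrong way here: $x_1^{k/2}+x_2^{k/2}\le y_1^{k/2}+y_2^{k/2}$ is compatible with $y_1+y_2<x_1+x_2$ (e.g.\ $x=(1,1)$, $y=(2^{2/k},0)$, $k>2$). So you never obtain the quadratic inequalities that control midpoints or inner products at $\z$.

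The missing idea is to take $H$ minimizing $S_2(H)$, so that exchange inequalities combine linearly with the bimedian identity (Lemma~\ref{lm:bimedian}). Even then $2$-opt is not enough. For a cycle, only one of the two reconnections of a pair of edges is again a Hamiltonian cycle, and pairwise separation genuinely fails: two edges of an optimal cycle can have arbitrarily close midpoints.

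The paper instead takes four edges $e_1,e_2,e_3,e_4$ in cyclic order and two simultaneous reconnections $H_1,H_2$. Together these use both pairings of $\{e_1,e_3\}$ and both pairings of $\{e_2,e_4\}$, which yields $|\m_{e_1}-\m_{e_3}|^2+|\m_{e_2}-\m_{e_4}|^2\ge\frac14\sum_i|e_i|^2$ (Lemma~\ref{lm:HC-4-edges}). Now suppose $\x$ lies in $2n$ balls $\B_\alpha^e$ with $\alpha^2=\frac{t}{4(t+1)}$. Label them alternately along the cycle as $e_1,f_1,\dots,e_n,f_n$. Apply the centroid inequality $\sum_i|\a_i-\x|^2\ge\frac1n\sum_{i<j}|\a_i-\a_j|^2$ separately to the midpoints of the $e_i$ and of the $f_i$. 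Then apply Lemma~\ref{lm:HC-4-edges} to each quadruple $e_i,f_i,e_j,f_j$. This gives $\frac{n-1}{4n}<\frac{t}{4(t+1)}$, so $n\le t$ and every point lies in at most $2t+1$ balls.

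That is where the integer $t$ enters; no longer segment reversals or Ramsey arguments are involved. A sign/obtuse-angle argument through Rankin's bound only gives a $k$-dependent multiplicity, namely $3(k+1)$ at $\alpha=\frac12$.

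\textbf{Your measure step is also false as split.} Use your intended radius $\frac{|e|}{2}\sqrt{t/(t+1)}$. No fixed reference measure can give every region $R_e$ of an edge in the cube a share $\bigl(|e|^2/((1+\frac1t)k)\bigr)^{k/2}$. Counterexample: $k=2$, $t=1$ in $[-\frac12,\frac12]^2$. Take four edges of length $0.99$ along the sides, centered at the side midpoints, and one edge of length $0.4$ through the center. The five disks of radius $|e|/(2\sqrt2)$ are pairwise disjoint, yet the claimed shares sum to $0.99^2+0.04>1$.

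The factor $2$ has nothing to do with orientations. The paper keeps multiplicity $2t+1$ and replaces each ball by the half-ball $\hB_\alpha^e$ facing the center of the cube. This half-ball lies in the circumscribed ball $\B_{\sqrt k/2}(\fzero)$, because $|\m_e|^2+\alpha^2|e|^2\le\frac{|\a|^2+|\b|^2}{2}\le\frac k4$ for $\alpha\le\frac12$ (Lemma~\ref{lm:hb-cover}); full balls do not fit there. Comparing volumes gives $\sum_{e\in H}\frac{V_k}{2}\alpha^k|e|^k\le(2t+1)V_k\bigl(\frac{\sqrt k}{2}\bigr)^k$, which is exactly (a).

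\textbf{Two smaller points.} First, (b) for odd $k$ needs no half-integer $t$. Take $t=1$ for $k=1$ and $t=\frac{k+1}{2}$ for odd $k\ge3$; using $(1+\frac1n)^n\le\mathrm{e}\bigl(1-\frac1{2n}+\frac1{2n^2}\bigr)$, these already give $(2t+1)(1+\frac1t)^{k/2}\le\mathrm{e}(k+1)$. Second, your threshold in $R_e$ should be $-\frac{1}{4(t+1)}|e|^2$. With $-\frac1{4t}|e|^2$ the radius is $\frac{|e|}2\sqrt{1-\frac1t}$, which degenerates at $t=1$.
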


Any Hamiltonian cycle on a finite set $X$ contains a Hamiltonian path on $X$ as a subset.
Any Hamiltonian path on $X$ is a spanning tree on $X$, and contains a perfect matching on $X$ as a subset when $|X|$ is even.
Thus, Theorems~\ref{thm:HP-asymp} and~\ref{thm:HC-exact} immediately imply the following new upper bounds.

\begin{corollary}
The following hold, where $\mathrm{e}=2.718..$ is Euler's number:
\begin{enumerate}[(a)]
\item $\SHP, \SST \leq 2\mathrm{e}(k + 1) \cdot k^{k / 2}$;
\item $\SPM = k^{k / 2} \cdot (1 + o_k(1))$.
\end{enumerate}
\end{corollary}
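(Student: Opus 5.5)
The corollary follows by monotonicity: each object is obtained from the previous one by deleting edges, and deleting edges can only decrease $S_k$, since every term $|e|^k$ is nonnegative. So the plan is to start from the Hamiltonian cycle of Theorem~\ref{thm:HC-exact} or the Hamiltonian path of Theorem~\ref{thm:HP-asymp}, delete edges, and take suprema over $X$.

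For (a), fix a finite $X \subseteq [0,1]^k$ with $|X| \geq 2$; the case $|X| = 1$ is trivial, as all sums are empty. Let $H$ be the Hamiltonian cycle given by Theorem~\ref{thm:HC-exact}\ref{HC-exact:centroid}, so that $S_k(H) \leq 2\mathrm{e}(k+1)\cdot k^{k/2}$. Deleting any single edge of $H$ gives a Hamiltonian path $P \subseteq H$ on $X$. Hence
\[
\SHP(X) \leq S_k(P) \leq S_k(H) \leq 2\mathrm{e}(k+1)\cdot k^{k/2}.
\]
This path $P$ is connected, acyclic and spans $X$, so it is a spanning tree, and therefore $\SST(X) \leq S_k(P)$ as well. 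The bound does not depend on $X$, so taking the supremum over all finite $X \subseteq [0,1]^k$ gives (a).

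For (b), the lower bound $\SPM \geq k^{k/2}$ is Example~\ref{ex:lower_bound}. For the upper bound, let $X$ have even size $n$ and take the Hamiltonian path $P = x_1x_2\dots x_n$ from Theorem~\ref{thm:HP-asymp}. The edges
\[
M = \{x_1x_2,\ x_3x_4,\ \dots,\ x_{n-1}x_n\}
\]
form a perfect matching on $X$ with $M \subseteq P$. Therefore
\[
\SPM(X) \leq S_k(M) \leq S_k(P) \leq k^{k/2}\cdot(1+o_k(1)).
\]

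The only point needing care is uniformity. The $o_k(1)$ term in Theorem~\ref{thm:HP-asymp} is a function of $k$ alone and does not depend on $X$; this is exactly what the statement $\SHP = k^{k/2}(1+o_k(1))$ asserts. Hence the supremum over $X$ keeps the same bound, giving $\SPM \leq k^{k/2}(1+o_k(1))$. Combined with the lower bound, this yields $\SPM = k^{k/2}(1+o_k(1))$.
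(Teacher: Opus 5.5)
Your proposal is correct and follows the paper's own argument. A Hamiltonian cycle contains a Hamiltonian path; that path is a spanning tree and, when $|X|$ is even, contains a perfect matching, so Theorems~\ref{thm:HC-exact} and~\ref{thm:HP-asymp} transfer directly, and your remarks on $|X|=1$ and on the $X$-independence of the $o_k(1)$ term just make explicit what the paper leaves implicit.
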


For perfect matchings, we show an exact upper bound stronger than those implied by Theorem~\ref{thm:HC-exact}.

\begin{theorem}\label{thm:PM-exact}
For any set $X \subseteq [0, 1]^k$ of even size, there exists a perfect matching $M$ on $X$ such that $S_k(M) \leq 2(k + 1) \cdot k^{k / 2}$.
In other words,
\[
\SPM \leq 2(k + 1) \cdot k^{k / 2}.
\]
\end{theorem}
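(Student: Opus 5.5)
The plan is to run a weighted version of the ball packing argument on a perfect matching $M$ that minimizes $S_2(M)=\sum_{e\in M}|e|^2$, rather than $S_k$. The point of this choice is that minimality gives a clean exchange inequality for every pair of edges $\a\b,\c\d\in M$:
\[
|\a-\b|^2+|\c-\d|^2\le |\a-\c|^2+|\b-\d|^2 \quad\text{and}\quad |\a-\b|^2+|\c-\d|^2\le |\a-\d|^2+|\b-\c|^2 .
\]
Expanding the squares, these are equivalent to
\[
\langle \a-\d,\ \b-\c\rangle\ge 0 \quad\text{and}\quad \langle \a-\c,\ \b-\d\rangle\ge 0,
\]
which are linear conditions in each point separately. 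These conditions are what will control how the regions attached to different edges may overlap.

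Concretely, to each edge $e=\a\b$ I will attach a nonnegative function $f_e$ on $\R^k$ that plays the role of a ball. I would try a ``double cone'' profile: $f_e$ is supported on points $\y$ seeing the segment $\a\b$ at an obtuse angle, i.e.\ $\langle \y-\a,\y-\b\rangle\le 0$ (the diametral ball), and decays linearly in a suitable gauge from the midpoint $\m_e$ to the boundary. The target is two estimates:
\begin{enumerate}[(i)]
\item the mass restricted to the cube satisfies $\int_{[0,1]^k} f_e\ge |e|^k/\bigl(2(k+1)\,k^{k/2}\bigr)$;
\item for every point $\y$, $\sum_{e\in M} f_e(\y)\le 1$.
\end{enumerate}
Integrating (ii) over the cube and using (i) then gives $S_k(M)\le 2(k+1)\,k^{k/2}$. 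The factor $k+1$ should come from the cone integral $\int_0^1 t^k\,dt=1/(k+1)$. The factor $k^{k/2}$ should come from comparing $|e|$ with the cube diagonal $\sqrt k$, for instance by replacing the ball by a homothet of a region inside the cube with ratio $|e|/\sqrt k$; this automatically keeps the support inside $[0,1]^k$ and avoids the usual loss from enlarging the cube. The factor $2$ should come from splitting the cone into two halves, one based at each endpoint.

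For (i), I would place the apex of each half-cone at an endpoint of $e$. I would take the base to be the appropriately scaled copy of the cube region spanned by $\a,\b$, so that the volume computation reduces to a single cone-volume formula.

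For (ii), I expect the main obstacle. In classical ball packing one proves disjointness; here overlaps must be allowed, but with total weight at most $1$. The approach I would try first is to fix $\y$ and order the edges whose regions contain $\y$ by length. I would then use the exchange inequalities $\langle \a-\d,\b-\c\rangle\ge 0$ to show that, for two such edges, $\y$ lies deep inside the cone of the longer edge only if it lies near the boundary of the cone of the shorter one. This should give $f_e(\y)+f_{e'}(\y)\le 1$ pairwise. Extending from pairs to the full sum is the delicate part; I would try to exploit that the linear constraints force the directions $\y-\m_e$ to be nearly orthogonal. If a uniform pointwise bound fails, the fallback is to bound the integral of $\sum_e f_e$ directly, via a charging argument in which each edge charges to its shorter neighbours.
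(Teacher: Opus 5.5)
As written this is a plan rather than a proof, and the gap is exactly your step (ii). Pairwise bounds $f_e+f_{e'}\le 1$ do not control $\sum_e f_e(\y)$, because a point can lie in many diametral balls of an $S_2$-minimal matching at once. For example, take the matching pairing $\delta\mathbf{e}_i$ with $\mathbf{e}_i$, $1\le i\le k$, with small $\delta>0$; the point $\frac{1}{2k}\fone$ lies in all $k$ balls. ``Nearly orthogonal directions'' is not an argument.

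The missing step is to turn your two exchange inequalities into a statement about midpoints. Adding them and using the bimedian identity (Lemma~\ref{lm:bimedian}) gives $|\m_e-\m_f|^2\ge (|e|^2+|f|^2)/4$. So if $\y$ lies in the open diametral balls of $e_1,\dots,e_n$, then $|\m_{e_i}-\m_{e_j}|^2>|\m_{e_i}-\y|^2+|\m_{e_j}-\y|^2$, and the vectors $\m_{e_i}-\y$ have pairwise negative inner products. The paper then uses Rankin's bound (Lemma~\ref{lm:sph-code-bound}) to conclude that these balls form a $(k+1)$-fold packing. There the $k+1$ is a covering multiplicity, not a cone integral.

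Your weighted formulation can be completed in the same spirit. Put $r_i=|e_i|/2$, $w_i=r_i^{-2}$ and $W=\sum_i w_i$. The weighted centroid inequality $\sum_i w_i|\m_{e_i}-\y|^2\ge W^{-1}\sum_{i<j}w_iw_j|\m_{e_i}-\m_{e_j}|^2$ has right-hand side at least $W^{-1}\sum_{i<j}(w_i+w_j)=n-1$. This gives $\sum_i\bigl(1-|\y-\m_{e_i}|^2/r_i^2\bigr)\le 1$, which dominates your linear profile. Nothing of this kind is in the proposal.

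Step (i) is also set up on the wrong domain. Integrating over $[0,1]^k$ requires a lower bound on the mass of $f_e$ inside the cube, and that is exactly where ball-packing arguments lose. For a short edge from $\fzero$ to $\epsilon\mathbf{e}_1$, only a $2^{1-k}$ fraction of the diametral ball lies in the cube. For the linear profile, the mass in the cube is then $2^{2-2k}V_k k^{k/2}$ times your target, with $V_k$ the volume of the unit ball, and this is about $0.99<1$ for $k=9$.

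Your remedies (homothets of regions inside the cube, or half-cones with apices at $\a,\b$ over a scaled ``cube region spanned by $\a,\b$'') are not well defined, and they conflict with (ii). A support inside the diametral ball protrudes from the cube as above. Supports anchored at the endpoints and pushed into the cube lose the separation property: in the example above, all $k$ edges, each of length close to $1$, start within $\delta$ of $\fzero$.

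The paper instead translates $X$ into $[-\frac12,\frac12]^k$ and keeps only the half-ball $\hB_{1/2}^e$ facing the origin. By Lemma~\ref{lm:hb-cover} this lies in $\B_{\sqrt k/2}(\fzero)$, and the paper compares volumes inside that ball rather than inside the cube. The factor $2$ comes from halving the balls, and $k^{k/2}$ from the ratio of the radii $\sqrt k/2$ and $|e|/2$. With that domain, your linear profile has integral $V_k(|e|/2)^k/(2(k+1))$ over $\hB_{1/2}^e$. Combined with the pointwise bound above, it would give exactly $2(k+1)\,k^{k/2}$.
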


\subsection{Overview}
\label{subsec:overview}

The rest of the paper is organized as follows.
In Section~\ref{sec:packing}, we present a new generalized ball packing lemma.
We begin by applying it to the setting of perfect matchings, proving Theorem~\ref{thm:PM-exact} in Section~\ref{sec:matching} using classical results of Rankin~\cite{rankinClosestPackingSpherical1955} on the maximal size of spherical codes with negative dot products.
In Section~\ref{sec:HC}, we turn to Hamiltonian cycles, proving Theorem~\ref{thm:HC-exact} via an alternative approach based on properties of centroids of finite point sets.
In Section~\ref{sec:asymp}, we adapt the asymptotic analysis of~\cite[Section~4]{baloghTravelingSalesmanProblem2024} to establish Theorem~\ref{thm:HP-asymp}.
Finally, Section~\ref{sec:concl} concludes with a discussion of related questions.

\section{Half-ball \texorpdfstring{$t$}{t}-fold packing lemma}
\label{sec:packing}

Denote the open ball of radius $r \geq 0$ centered at $\a \in \R^k$ by $\B_r(\a) := \{ \x \in \R^k \,:\, |\x - \a| < r \}$.
For an edge $e = \a\b$ with $\a, \b \in \R^k$, define 
\[
\m_e := \frac{\a + \b}{2},\quad \B_\alpha^e := \B_{\alpha|e|}(\m_e),\quad \hB_\alpha^e := \begin{cases}
\left\{ \x \in \B_\alpha^e \,:\, \langle \x, \m_e\rangle < |\m_e|^2 \right\} &\text{if } \m_e \neq \fzero,\\
\left\{ \x \in \B_\alpha^e \,:\, x_1 < 0 \right\} &\text{if } \m_e = \fzero,
\end{cases}
\]
where $\fzero = (0,\dots, 0) \in \R^k$ is the origin.
In other words, $\m_e$ is the midpoint of $e$, $\B_\alpha^e$ is the open ball of radius $\alpha |e|$ centered at $\m_e$, and $\hB_\alpha^e$ is the open half-ball with the same center and radius oriented towards the origin, or consisting of points with negative first coordinate when $\m_e$ itself is the origin.

The next definition follows the Handbook of Discrete and Computational Geometry~\cite[Section~2.3]{toth20172}.

\begin{definition}
Let $t$ be a positive integer.
A family $\mathcal{A} = \{A_i\}_{i \in I}$ of sets $A_i \subseteq \R^k$, each of finite volume, is a \textit{$t$-fold packing} if each point of $\R^k$ belongs to the interior of at most $t$ sets in $\mathcal{A}$.
A \textit{packing} is a 1-fold packing.
\end{definition}

Let $X \subseteq \R^k$ be a finite set.
For a set $E$ of edges on $X$, let $\mathfrak{B}_\alpha^E := \{ \B_\alpha^e \}_{e \in E}$ and $\mathfrak{hB}_\alpha^E := \{ \hB_\alpha^e \}_{e \in E}$.
The next two lemmas provide an upper bound on $S_k(E)$ when it is known that $\mathfrak{hB}_\alpha^E$ is a $t$-fold packing for some $\alpha$ and $t$.

\begin{lemma}\label{lm:hb-cover}
Let $0 \leq \alpha \leq \frac{1}{2}$, $e = \a\b$ be an edge with $\a, \b \in \R^k$ and $|\a|, |\b| \leq r$.
Then $\hB_\alpha^e \subseteq \B_r(\fzero)$.
\end{lemma}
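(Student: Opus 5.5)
The approach is a direct computation: show that every point of $\hB_\alpha^e$ has squared norm strictly less than $r^2$, using the parallelogram law to express the bound in terms of $|\a|$ and $|\b|$.

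First, if $|e| = 0$ then $\B_\alpha^e$ has radius $0$ and is empty, so $\hB_\alpha^e$ is empty and there is nothing to prove. So assume $|e| > 0$. Take any $\x \in \hB_\alpha^e$ and write $\x = \m_e + \y$. Since $\x \in \B_\alpha^e$,
\[
|\y| < \alpha|e| \leq \frac{|e|}{2}.
\]
Next I check that $\langle \y, \m_e\rangle \le 0$ in both cases of the definition. If $\m_e \neq \fzero$, the half-ball condition $\langle \x, \m_e\rangle < |\m_e|^2$ is exactly $\langle \y, \m_e\rangle < 0$. If $\m_e = \fzero$, then $\langle \y, \m_e\rangle = 0$ trivially; the extra restriction $x_1 < 0$ is simply not needed.

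Expanding the norm and using these two facts gives
\[
|\x|^2 = |\m_e|^2 + 2\langle \y, \m_e\rangle + |\y|^2 < |\m_e|^2 + \frac{|e|^2}{4}.
\]
By the parallelogram law,
\[
|\m_e|^2 + \frac{|e|^2}{4} = \frac{|\a+\b|^2 + |\a-\b|^2}{4} = \frac{|\a|^2 + |\b|^2}{2} \le r^2.
\]
Hence $|\x| < r$, that is, $\x \in \B_r(\fzero)$, which proves $\hB_\alpha^e \subseteq \B_r(\fzero)$.

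The only points needing care are strictness (which comes from the open ball condition $|\y| < \alpha|e|$) and the degenerate cases $|e|=0$ and $\m_e=\fzero$, both handled above. The rest is this one-line computation.
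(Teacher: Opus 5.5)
Your proof is correct and follows essentially the same route as the paper: write $\x = \m_e + \y$, use $\langle \y, \m_e\rangle \le 0$ to get $|\x|^2 < |\m_e|^2 + \alpha^2|e|^2$, and finish with the parallelogram law. The only cosmetic difference is that you absorb the case $\m_e = \fzero$ into the same computation, whereas the paper treats it separately via the triangle inequality $|\a - \b| \le |\a| + |\b|$.
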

\begin{proof}
Let $\x \in \hB_\alpha^e$.
If $\m_e = \fzero$, then $|\x| < \alpha |e| \leq \frac{|\a - \b|}{2} \leq \frac{|\a| + |\b|}{2} \leq r$.
Otherwise, $\x = \m_e + \z$ for some $\z \in \R^k$ with $|\z| < \alpha |e|$ and $\langle \m_e + \z, \m_e \rangle < \langle \m_e, \m_e \rangle$, i.e. $\langle \z, \m_e \rangle < 0$.
It follows that
\[
|\x|^2 = |\m_e + \z|^2 = |\m_e|^2 + |\z|^2 + 2\langle \z, \m_e \rangle < |\m_e|^2 + |\z|^2,
\]
so
\[
|\x| < \sqrt{|\m_e|^2 + \alpha^2|e|^2} \leq \sqrt{ \left| \frac{\a + \b}{2} \right|^2 + \left| \frac{\a - \b}{2} \right|^2} = \sqrt{\frac{|\a|^2 + |\b|^2}{2}} \leq r. \qedhere
\]
\end{proof}

\begin{lemma}\label{lm:hb-pack}
Let $X \subseteq \left[ -\frac{1}{2}, \frac{1}{2} \right]^k$ be a finite set and $E$ be a set of edges on $X$.
If $\mathfrak{hB}_\alpha^E$ is a $t$-fold packing for some $0 \leq \alpha \leq \frac{1}{2}$ and a positive integer $t$, then
\begin{equation}\label{eq:hb-pack}
S_k(E) \leq \frac{2t}{(2\alpha)^k} \cdot k^{k / 2}.    
\end{equation}
\end{lemma}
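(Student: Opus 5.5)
The plan is a volume-counting argument. Every point of $X$ lies in the cube $\left[-\frac12,\frac12\right]^k$, so for every $\a \in X$ we have $|\a| \leq \frac{\sqrt{k}}{2}$. Set $r := \frac{\sqrt{k}}{2}$. Since $0 \leq \alpha \leq \frac12$, Lemma~\ref{lm:hb-cover} applies to every edge $e \in E$ and gives $\hB_\alpha^e \subseteq \B_r(\fzero)$. So all the half-balls in $\mathfrak{hB}_\alpha^E$ lie inside one ball of radius $\frac{\sqrt{k}}{2}$ centered at the origin.

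Next we use the $t$-fold packing hypothesis. Let $\fone_A$ denote the indicator function of $A$. Each point of $\R^k$ lies in the interior of at most $t$ of the sets $\hB_\alpha^e$; these sets are open, so each equals its interior. Hence $\sum_{e \in E} \fone_{\hB_\alpha^e} \leq t \cdot \fone_{\B_r(\fzero)}$ pointwise. Integrating gives
\[
\sum_{e \in E} \operatorname{vol}\left(\hB_\alpha^e\right) \leq t \cdot \operatorname{vol}\left(\B_r(\fzero)\right).
\]
Write $\omega_k$ for the volume of the unit ball. Each $\hB_\alpha^e$ is exactly half of the open ball of radius $\alpha|e|$: it is cut by a hyperplane through the center $\m_e$, either $\langle \x, \m_e\rangle = |\m_e|^2$ or $x_1 = 0$. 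Its volume is therefore $\frac12 \omega_k \alpha^k |e|^k$. The inequality becomes
\[
\frac{1}{2}\,\omega_k\, \alpha^k \sum_{e\in E}|e|^k \leq t\, \omega_k \left(\frac{\sqrt{k}}{2}\right)^k .
\]
Rearranging yields $S_k(E) \leq \frac{2t}{\alpha^k} \cdot \frac{k^{k/2}}{2^k} = \frac{2t}{(2\alpha)^k} \cdot k^{k/2}$, which is \eqref{eq:hb-pack}.

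There are two edge cases. When $\alpha = 0$, the right-hand side is read as $+\infty$ and there is nothing to prove. Edges of length zero do not arise, since edges join distinct points of $X$. The main point needing care is justifying the pointwise multiplicity bound from the definition of a $t$-fold packing; openness of the half-balls makes this immediate. The rest is routine once Lemma~\ref{lm:hb-cover} has placed everything inside $\B_{\sqrt{k}/2}(\fzero)$.
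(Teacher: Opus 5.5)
Your proposal is correct and follows essentially the same route as the paper: you bound $|\a| \leq \frac{\sqrt{k}}{2}$, invoke Lemma~\ref{lm:hb-cover} to place every half-ball inside $\B_{\sqrt{k}/2}(\fzero)$, and compare the total half-ball volume $\sum_{e \in E} \frac{V_k}{2}\alpha^k|e|^k$ with $t$ times the volume of that ball. Your extra remarks, namely that the half-balls are open (which justifies the multiplicity bound) and the degenerate case $\alpha = 0$, are harmless refinements that the paper leaves implicit.
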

\begin{proof}
Note that $|\a| \leq \frac{\sqrt{k}}{2}$ for all $\a \in X$, thus, by Lemma~\ref{lm:hb-cover}, $\hB_\alpha^e \subseteq \B_{\sqrt{k} / 2}(\fzero)$ for each edge $e \in E$.
Since $\mathfrak{hB}_\alpha^E$ is a $t$-fold packing, the sum of volumes of half-balls $\hB_\alpha^e$ is at most $t$ times the volume of $\B_{\sqrt{k} / 2}(\fzero)$.
Denoting by $V_k$ the volume of the $k$-dimensional unit ball, we have
\[
\sum_{e \in E}\frac{V_k}{2}\alpha^k|e|^k \leq t \cdot V_k \left( \frac{\sqrt{k}}{2} \right)^k, \quad\text{so}\quad S_k(E) \leq \frac{2t}{(2\alpha)^k} \cdot k^{k / 2}. \qedhere
\]
\end{proof}

\begin{remark}
Note that if $\mathfrak{B}_\alpha^E$ is a $t$-fold packing, then so is $\mathfrak{hB}_\alpha^E$, since, by definition, $\hB_\alpha^e \subseteq \B_\alpha^e$ for any edge $e$.
In all applications of Lemma~\ref{lm:hb-pack} that follow, the stronger condition on $\mathfrak{B}_\alpha^E$ is satisfied.
\end{remark}

The idea of using half-balls was introduced by the present author in~\cite{gordeevChristofidesBasedEUROCOMB2025}.
Earlier works applied the volume bound directly to the collection of balls $\mathfrak{B}_\alpha^E$, which produced exponentially weaker bounds on $S_k(E)$: for example, it was shown in~\cite[Lemma~2.4]{baloghTravelingSalesmanProblem2024} that when $E$ is a set of edges on $X \subseteq \left[ -\frac{1}{2}, \frac{1}{2} \right]^k$, the balls of $\mathfrak{B}_{1/4}^E$ in general cannot be covered by a ball of radius smaller than $\frac{\sqrt{5k}}{4}$, which in the case $t = 1$ and $\alpha = 1/4$ results in inequality $S_k(E) \leq (\sqrt{5})^k \cdot k^{k / 2}$.
In the same case, Lemma~\ref{lm:hb-pack} gives $S_k(E) \leq 2^{k + 1} \cdot k^{k / 2}$.

The idea of using $t$-fold packings with $t > 1$ in this context is new.
We will see in the next section that increasing $t$ often allows to obtain $t$-fold packings $\mathfrak{B}_\alpha^E$ with comparatively larger $\alpha$ on the same edge set $E$, leading to exponential improvements in the upper bound~\eqref{eq:hb-pack} on $S_k(E)$.
In Section~\ref{sec:HC}, we will show that $\mathfrak{B}_\alpha^H$ is a $t$-fold packing for a careful choice of $t > 1$, $\alpha$ and a Hamiltonian cycle $H$ on a given point set $X$.
This allows to apply Lemma~\ref{lm:hb-pack} in the case of Hamiltonian cycles directly, without first approximating a Hamiltonian cycle with a less complex edge structure.

\section{Perfect matchings}
\label{sec:matching}

The goal of this section is to prove Theorem~\ref{thm:PM-exact}.
We already have the first ingredient, Lemma~\ref{lm:hb-pack}.
To complete the proof, for any point set $X \subseteq [0, 1]^k$ of even size, it would suffice to find a perfect matching $M$ on $X$ such that $\mathfrak{B}_\alpha^M$ is a $t$-fold packing for some appropriate $t$ and $\alpha$.

Restricting ourselves for a moment to the case $t = 1$, note that $\mathfrak{B}_\alpha^M$ is a packing if and only if for any $e = \a\b, f = \c\d \in M$, the inequality $|\m_e - \m_f| \geq \alpha(|e| + |f|)$ holds.
We can express the distance $|\m_e - \m_f|$ in terms of distances between points $\a, \b, \c, \d$ using the following classical formula for the length of a bimedian of a tetrahedron (see, for example, the textbook of Altshiller-Court~\cite[page 56]{altshiller-courtModernPureSolid1935}).

\begin{lemma}\label{lm:bimedian}
For any $\a, \b, \c, \d \in \R^k$,
\[
\left| \frac{\a + \b}{2} - \frac{\c + \d}{2} \right|^2 = \frac{|\a - \c|^2 + |\b - \d|^2 + |\a - \d|^2 + |\b - \c|^2 - |\a - \b|^2 - |\c - \d|^2}{4}.
\]
\end{lemma}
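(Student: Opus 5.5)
The identity is purely algebraic, so the plan is to expand both sides in terms of dot products and compare; no earlier results are needed. Since both sides are invariant under a common translation of $\a, \b, \c, \d$, one could first place the origin at a convenient point. It is cleaner, though, to just expand everything using $|\u - \vv|^2 = |\u|^2 + |\vv|^2 - 2\langle \u, \vv\rangle$.

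For the left-hand side, multiply by $4$ to get $|\a + \b - \c - \d|^2$. Expanding gives
\[
|\a|^2 + |\b|^2 + |\c|^2 + |\d|^2 + 2\langle \a, \b\rangle + 2\langle \c, \d\rangle - 2\langle \a, \c\rangle - 2\langle \a, \d\rangle - 2\langle \b, \c\rangle - 2\langle \b, \d\rangle .
\]

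For the numerator on the right-hand side, the four added squared distances $|\a-\c|^2, |\b-\d|^2, |\a-\d|^2, |\b-\c|^2$ contribute each of $|\a|^2, |\b|^2, |\c|^2, |\d|^2$ exactly twice. They also contribute $-2\langle\a,\c\rangle - 2\langle\b,\d\rangle - 2\langle\a,\d\rangle - 2\langle\b,\c\rangle$. The two subtracted terms, $|\a-\b|^2$ and $|\c-\d|^2$, remove one copy of each squared norm and contribute $+2\langle\a,\b\rangle + 2\langle\c,\d\rangle$. The result coincides term by term with the expansion of the left-hand side. Dividing by $4$ then gives the claim.

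The only step needing care is the bookkeeping of the coefficients of the squared norms, namely that $2 - 1 = 1$ for each point. This is routine, so there is no real obstacle. As a sanity check, the identity is the parallelogram law applied to the two pairs of opposite edges, and taking $\c = \d$ recovers the median-length formula.
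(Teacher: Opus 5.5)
Your proposal is correct and follows essentially the same route as the paper. The paper also expands both sides into squared norms and dot products, observing that each equals $\frac{|\a|^2+|\b|^2+|\c|^2+|\d|^2}{4} + \frac{\langle \a,\b\rangle + \langle \c,\d\rangle - \langle \a,\c\rangle - \langle \b,\d\rangle - \langle \a,\d\rangle - \langle \b,\c\rangle}{2}$, and your coefficient bookkeeping matches this.
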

\begin{proof}
Both sides are equal to 
\[
\frac{|\a|^2 + |\b|^2 + |\c|^2 + |\d|^2}{4} + \frac{\langle \a, \b \rangle + \langle \c, \d \rangle - \langle \a, \c \rangle - \langle \b, \d \rangle - \langle \a, \d \rangle - \langle \b, \c \rangle}{2}.\qedhere
\]
\end{proof}

The following key lemma for perfect matchings easily follows from this formula.

\begin{lemma}\label{lm:match-key}
Let $X \subseteq \R^k$ be a set of even size, and let $M$ be a perfect matching on $X$ with minimal possible $S_2(M)$.
Then, for any $e, f \in M$,
\[
|\m_e - \m_f|^2 \geq \frac{|e|^2 + |f|^2}{4}.
\]
\end{lemma}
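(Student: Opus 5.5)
The plan is a local exchange argument based on the minimality of $S_2(M)$, combined with the bimedian formula of Lemma~\ref{lm:bimedian}. Fix two distinct edges $e = \a\b$ and $f = \c\d$ of $M$. (If $e = f$ the left-hand side is $0$ while the right-hand side is $|e|^2/2$, so the statement is meant for distinct edges.) Replacing $\{e, f\}$ by $\{\a\c, \b\d\}$ in $M$ gives another perfect matching on $X$, and so does replacing it by $\{\a\d, \b\c\}$. Since $M$ minimizes $S_2$, neither swap can decrease the sum of squared lengths:
\[
|\a - \c|^2 + |\b - \d|^2 \geq |\a - \b|^2 + |\c - \d|^2,
\qquad
|\a - \d|^2 + |\b - \c|^2 \geq |\a - \b|^2 + |\c - \d|^2 .
\]

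Next, add these two inequalities. This shows that the numerator in Lemma~\ref{lm:bimedian},
\[
|\a - \c|^2 + |\b - \d|^2 + |\a - \d|^2 + |\b - \c|^2 - |\a - \b|^2 - |\c - \d|^2,
\]
is at least $2(|e|^2 + |f|^2) - (|e|^2 + |f|^2) = |e|^2 + |f|^2$. Dividing by $4$ and applying Lemma~\ref{lm:bimedian} gives
\[
|\m_e - \m_f|^2 \geq \frac{|e|^2 + |f|^2}{4},
\]
which is the claim.

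There is no real obstacle here. The only points needing care are that both re-pairings are genuine perfect matchings, which holds because the four endpoints are distinct, and that a minimizer $M$ exists, which holds because $X$ is finite.
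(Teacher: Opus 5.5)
Your proof is correct and is essentially the paper's argument: both re-pairings $\{\a\c,\b\d\}$ and $\{\a\d,\b\c\}$ are perfect matchings, so minimality of $S_2(M)$ gives the two inequalities, and adding them inside Lemma~\ref{lm:bimedian} yields the bound. You are also right that the statement must be read for distinct edges $e \neq f$, which is the only way the paper uses it.
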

\begin{proof}
Denote $e = \a\b$, $f = \c\d$, $M_1 := (M \setminus \{e, f\}) \cup \{\a\c, \b\d\}$, $M_2 := (M \setminus \{e, f\}) \cup \{\a\d, \b\c\}$.
Note that $M_1$ and $M_2$ are also perfect matchings on $X$, thus $S_2(M_1) \geq S_2(M)$ and $S_2(M_2) \geq S_2(M)$, or
\[
|\a - \c|^2 + |\b - \d|^2 \geq |e|^2 + |f|^2 \quad\text{and}\quad |\a - \d|^2 + |\b - \c|^2 \geq |e|^2 + |f|^2.
\]
Then, by Lemma~\ref{lm:bimedian},
\[
| \m_e - \m_f |^2 = \frac{|\a - \c|^2 + |\b - \d|^2 + |\a - \d|^2 + |\b - \c|^2 - |e|^2 - |f|^2}{4} \geq \frac{|e|^2 + |f|^2}{4}.\qedhere
\]
\end{proof}

Combining Lemma~\ref{lm:match-key} with the inequality $|e|^2 + |f|^2 \geq \frac{(|e| + |f|)^2}{2}$, we see that the set $\mathfrak{B}_\alpha^M$ is a packing for $\alpha = \frac{1}{\sqrt{8}}$ and any $S_2$-minimal perfect matching $M$ on $X$. 
Together with Lemma~\ref{lm:hb-pack}, this implies $\SPM \leq 2 \cdot (\sqrt{2})^k \cdot k^{k / 2}$, see~\cite[Theorem~3.4]{gordeevChristofidesBasedEUROCOMB2025}.
However, we can get a much stronger bound by considering $t$-fold packings with $t > 1$ instead.
Namely, we will show that, under the same assumptions as in Lemma~\ref{lm:match-key}, $\mathfrak{B}_{1/2}^M$ is a $(k + 1)$-fold packing, using properties of \textit{spherical codes} with negative dot products.
We will also investigate an alternative approach in Section~\ref{sec:HC}.

Let $\mathrm{S}_k$ denote the unit sphere in $k$ dimensions centered at the origin, that is, $\mathrm{S}_k := \{ \x \in \R^k \,:\, |\x| = 1 \}$.
A $(k, n, s)$-\textit{spherical code} of dimension $k$, size $n$ and maximum dot product $s$ is a set of $n$ points $X \subseteq \mathrm{S}_k$ with $\langle \x, \y \rangle \leq s$ for any distinct $\x, \y \in X$.
Rankin~\cite{rankinClosestPackingSpherical1955} determined the maximal possible size $n$ of a $(k, n, s)$-spherical code when $s \leq 0$.
For convenience, we state this result in the following form, which is directly implied by~\cite[Theorem 1]{rankinClosestPackingSpherical1955}.

\begin{lemma}[Rankin]\label{lm:sph-code-bound}
A $(k, n, s)$-spherical code with $s < 0$ has $n \leq k + 1$.
\end{lemma}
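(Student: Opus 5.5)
The plan is to prove the equivalent statement: if $\x_1, \dots, \x_n \in \mathrm{S}_k$ satisfy $\langle \x_i, \x_j \rangle < 0$ for all $i \neq j$, then $n \leq k + 1$. This is exactly the hypothesis of a $(k, n, s)$-spherical code with $s < 0$. I will argue by induction on the dimension $k$, projecting away one of the vectors.

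\textbf{Base case $k = 1$.} Here $\mathrm{S}_1 = \{-1, 1\}$, so $n \leq 2 = k + 1$.

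\textbf{Inductive step.} Let $k \geq 2$ and assume the claim in dimension $k - 1$. Take such a family $\x_1, \dots, \x_n$ with $n \geq 2$; the case $n = 1$ is trivial. For $i < n$ project onto the hyperplane orthogonal to $\x_n$:
\[
\y_i := \x_i - \langle \x_i, \x_n \rangle \x_n .
\]
A direct expansion, using $|\x_n| = 1$, gives for $i \neq j < n$
\[
\langle \y_i, \y_j \rangle = \langle \x_i, \x_j \rangle - \langle \x_i, \x_n \rangle \langle \x_j, \x_n \rangle .
\]
The first term is negative. The subtracted product is positive, being a product of two negative numbers. Hence $\langle \y_i, \y_j \rangle < 0$.

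\textbf{The degenerate case.} The one point needing care is that some $\y_i$ might vanish, which would prevent normalizing. Suppose $\y_i = \fzero$. Then $\x_i = c\,\x_n$ for a scalar $c$, and $|\x_i| = 1$ together with $\langle \x_i, \x_n \rangle < 0$ forces $\x_i = -\x_n$. If some third vector $\x_j$ existed, it would satisfy both $\langle \x_j, \x_n \rangle < 0$ and $\langle \x_j, -\x_n \rangle < 0$, which is impossible. So in this case $n = 2 \leq k + 1$.

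\textbf{Conclusion.} Otherwise all $\y_i$ are nonzero. The normalized vectors $\y_i / |\y_i|$, for $i < n$, are $n - 1$ unit vectors in the $(k-1)$-dimensional space $\x_n^\perp \cong \R^{k-1}$ with pairwise negative dot products. Negative dot products force them to be distinct, so they form a spherical code with negative maximum dot product. The induction hypothesis gives $n - 1 \leq k$, that is, $n \leq k + 1$.

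I do not expect a genuine obstacle here. The only subtle step is excluding zero projections, which the degenerate case above handles.
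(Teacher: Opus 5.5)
Your proof is correct. It differs from the paper only in that the paper gives no proof of this lemma at all. The paper quotes the statement as a consequence of Rankin's Theorem~1, which (as the paper notes) determines the maximum size of a spherical code for every $s \leq 0$. It then uses the lemma as a black box in Propositions~\ref{prop:match-obtuse} and~\ref{prop:tsp-obtuse}.

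You instead give the classical self-contained induction on dimension. The identity $\langle \y_i, \y_j \rangle = \langle \x_i, \x_j \rangle - \langle \x_i, \x_n \rangle \langle \x_j, \x_n \rangle$ shows that projecting onto $\x_n^{\perp}$ preserves pairwise negativity. Your handling of $\y_i = \fzero$ (forcing $\x_i = -\x_n$, hence $n = 2$) closes the only possible gap. That case also follows directly from the same identity, since $\y_i = \fzero$ together with $n \geq 3$ would give $0 = \langle \y_i, \y_j \rangle < 0$.

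Your version makes the spherical-code approach of Section~\ref{sec:matching} fully elementary and self-contained, on a par with the centroid argument the paper favours in Section~\ref{sec:HC}. The citation instead gives the complete extremal picture for all $s \leq 0$, including the boundary case $s = 0$, which the paper does not actually need.
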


\begin{proposition}\label{prop:match-obtuse}
Let $X \subseteq \R^k$ be a set of even size, and let $M$ be a perfect matching on $X$ with minimal possible $S_2(M)$.
Then $\mathfrak{B}_{1/2}^M$ is a $(k+1)$-fold packing.
\end{proposition}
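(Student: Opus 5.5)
Suppose a point $\x \in \R^k$ lies in $t$ balls $\B_{1/2}^{e_1}, \dots, \B_{1/2}^{e_t}$ with $e_1, \dots, e_t \in M$ distinct. Since the balls are open, being in a ball and being in its interior are the same, so it suffices to show $t \leq k+1$. The plan is to turn the condition of Lemma~\ref{lm:match-key} into a negative pairwise dot product condition on suitable vectors and then apply Rankin's bound, Lemma~\ref{lm:sph-code-bound}.

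Put $\vv_i := \m_{e_i} - \x$ and $r_i := |e_i|/2$. Membership $\x \in \B_{1/2}^{e_i}$ means $|\vv_i| < r_i$. Lemma~\ref{lm:match-key} gives, for $i \neq j$,
\[
|\vv_i - \vv_j|^2 = |\m_{e_i} - \m_{e_j}|^2 \geq \frac{|e_i|^2 + |e_j|^2}{4} = r_i^2 + r_j^2 .
\]
Expanding the left side gives
\[
|\vv_i|^2 + |\vv_j|^2 - 2\langle \vv_i, \vv_j \rangle \geq r_i^2 + r_j^2 ,
\]
and hence
\[
2\langle \vv_i, \vv_j \rangle \leq (|\vv_i|^2 - r_i^2) + (|\vv_j|^2 - r_j^2) < 0 .
\]
So the vectors $\vv_1, \dots, \vv_t$ have pairwise strictly negative dot products. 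In particular each $\vv_i$ is nonzero whenever $t \geq 2$, since a zero vector would have dot product $0$ with the others. The case $t = 1$ is trivial.

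Normalize: $\u_i := \vv_i / |\vv_i| \in \mathrm{S}_k$. These are $t$ distinct points with $\langle \u_i, \u_j \rangle < 0$ for $i \neq j$. Let $s := \max_{i \neq j} \langle \u_i, \u_j \rangle$. This is a maximum over finitely many pairs, so $s < 0$, and $\{\u_1, \dots, \u_t\}$ is a $(k, t, s)$-spherical code. Lemma~\ref{lm:sph-code-bound} then gives $t \leq k+1$, which is the claim.

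The only delicate point is making sure the inequality is strict. This is why I use the open-ball condition $|\vv_i| < r_i$ rather than $\leq$. With that, Rankin's lemma applies with a genuinely negative $s$, and distinctness of the $\u_i$ follows from $\langle \u_i, \u_j \rangle < 0 < 1$.

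One edge case needs care: $|e_i| = 0$ cannot occur, since $X$ is a set and the edges join distinct points. Even if it did, such a ball would be empty and contain no point.
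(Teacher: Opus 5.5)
Your proof is correct and is essentially the paper's argument: Lemma~\ref{lm:match-key} together with the open-ball condition gives pairwise strictly negative dot products among the vectors $\m_{e_i}-\x$, and Rankin's bound (Lemma~\ref{lm:sph-code-bound}) applied to the normalized vectors caps their number at $k+1$. Your extra checks (nonzero vectors, distinct normalized points, and $s<0$ as a maximum over finitely many pairs) are small points the paper leaves implicit.
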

\begin{proof}
Fix any point $\x \in \R^k$ and let it be contained in the ball $\B_{1/2}^{e_i}$ for $n$ distinct edges $e_1, \dots, e_n \in M$, i.e. $|\m_{e_i} - \x|^2 < |e_i|^2 / 4$ for $1 \leq i \leq n$.
Due to Lemma~\ref{lm:match-key}, for any $i \neq j$ we have
\[
|\m_{e_i} - \m_{e_j}|^2 \geq \frac{|e_i|^2 + |e_j|^2}{4} > |\m_{e_i} - \x|^2 + |\m_{e_j} - \x|^2,
\]
therefore
\[
\langle \m_{e_i} - \x, \m_{e_j} - \x \rangle = \frac{|\m_{e_i} - \x|^2 + |\m_{e_j} - \x|^2 - |(\m_{e_i} - \x) - (\m_{e_j} - \x)|^2}{2} < 0.
\]
Then the set $\left\{\frac{\m_{e_i} - \x}{|\m_{e_i} - \x|} \,:\,1 \leq i \leq n\right\}$ is a $(k, n, s)$-spherical code for some $s < 0$, thus $n \leq k + 1$ by Lemma~\ref{lm:sph-code-bound}.
\end{proof}

\begin{proof}[Proof of Theorem~\ref{thm:PM-exact}]
After a parallel translation, we may assume that $X \subseteq \left[ -\frac{1}{2}, \frac{1}{2} \right]^k$.
Let $M$ be a perfect matching on $X$ which minimizes $S_2(M)$.
Due to Proposition~\ref{prop:match-obtuse}, $\mathfrak{B}_{1/2}^M$ is a $(k + 1)$-fold packing, then so is $\mathfrak{hB}_{1/2}^M$.
Applying Lemma~\ref{lm:hb-pack} with $E = M$, $t = k + 1$ and $\alpha = \frac{1}{2}$, we get
\[
S_k(M) \leq \frac{2(k + 1)}{\left( 2 \cdot \frac{1}{2} \right)^k} \cdot k^{k / 2} = 2(k + 1) \cdot k^{k / 2}. \qedhere
\]
\end{proof}

\section{Hamiltonian cycles}
\label{sec:HC}

At first glance, proving an analogue of Lemma~\ref{lm:match-key} for Hamiltonian cycles seems impossible, since the midpoints of two edges of an (even optimal) cycle can be arbitrarily close to each other, or even coincide as in Example~\ref{ex:lower_bound}.
However, it turns out that such an analogue exists if one considers four edges instead of two.

\begin{lemma}\label{lm:HC-4-edges}
Let $X \subseteq \R^k$ and let $H$ be a Hamiltonian cycle on $X$ with minimal possible $S_2(H)$.
Then, for any four edges $e_1, e_2, e_3, e_4 \in H$, numbered in the order in which they appear on the cycle,
\[
|\m_{e_1} - \m_{e_3}|^2 + |\m_{e_2} - \m_{e_4}|^2 \geq \frac{|e_1|^2 + |e_2|^2 + |e_3|^2 + |e_4|^2}{4}.
\]
\end{lemma}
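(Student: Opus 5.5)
The plan is to express both squared bimedians through Lemma~\ref{lm:bimedian} and bound the resulting pairwise distances using local exchange moves that keep $H$ a Hamiltonian cycle. Orient $H$ and write $e_i = \a_i\b_i$ for $i = 1, \dots, 4$, so that traversing the cycle gives
\[
\a_1 \xrightarrow{e_1} \b_1 \xrightarrow{P_1} \a_2 \xrightarrow{e_2} \b_2 \xrightarrow{P_2} \a_3 \xrightarrow{e_3} \b_3 \xrightarrow{P_3} \a_4 \xrightarrow{e_4} \b_4 \xrightarrow{P_4} \a_1 .
\]
Here $P_1, \dots, P_4$ are the (possibly trivial) subpaths of $H$ between consecutive chosen edges. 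By Lemma~\ref{lm:bimedian},
\[
4|\m_{e_1} - \m_{e_3}|^2 = |\a_1 - \a_3|^2 + |\b_1 - \b_3|^2 + |\a_1 - \b_3|^2 + |\b_1 - \a_3|^2 - |e_1|^2 - |e_3|^2 ,
\]
and the analogous identity holds for $e_2, e_4$. The difficulty compared with Lemma~\ref{lm:match-key} is that only one of the two reconnections of a pair of edges yields a Hamiltonian cycle. The other reconnection splits $H$ into two cycles, so we cannot bound every term pair by pair.

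First, I would use the standard 2-opt move on $e_1, e_3$. Replacing $e_1, e_3$ by $\a_1\a_3, \b_1\b_3$ and reversing the segment $\b_1 P_1 \a_2 e_2 \b_2 P_2 \a_3$ gives a Hamiltonian cycle. Minimality of $S_2(H)$ then yields
\[
|\a_1 - \a_3|^2 + |\b_1 - \b_3|^2 \geq |e_1|^2 + |e_3|^2 .
\]
In the same way, the 2-opt move on $e_2, e_4$ gives
\[
|\a_2 - \a_4|^2 + |\b_2 - \b_4|^2 \geq |e_2|^2 + |e_4|^2 .
\]

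Second, the remaining crossed terms are handled jointly by a single four-edge exchange, the ``double bridge''. Remove $e_1, e_2, e_3, e_4$ and add $\a_1\b_3$, $\b_1\a_3$, $\a_2\b_4$, $\b_2\a_4$. The resulting closed walk is
\[
\a_1 \to \b_3 \xrightarrow{P_3} \a_4 \to \b_2 \xrightarrow{P_2} \a_3 \to \b_1 \xrightarrow{P_1} \a_2 \to \b_4 \xrightarrow{P_4} \a_1 ,
\]
which visits every point of $X$ exactly once, so it is a Hamiltonian cycle. Minimality then gives
\[
|\a_1 - \b_3|^2 + |\b_1 - \a_3|^2 + |\a_2 - \b_4|^2 + |\b_2 - \a_4|^2 \geq \sum_{i=1}^4 |e_i|^2 .
\]
Adding the two bimedian identities and substituting the three inequalities, the right-hand side is at least
\[
\bigl(|e_1|^2 + |e_3|^2\bigr) + \bigl(|e_2|^2 + |e_4|^2\bigr) + \sum_{i=1}^4 |e_i|^2 - \sum_{i=1}^4 |e_i|^2 = \sum_{i=1}^4 |e_i|^2 .
\]
Dividing by $4$ gives the claim.

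The main obstacle is finding the double-bridge move; once it is in hand, the rest is bookkeeping. One also has to check degenerate configurations where some $P_i$ is a single vertex, for example $\b_1 = \a_2$. In those cases both the 2-opt cycles and the double-bridge walk are still closed walks visiting each vertex exactly once, so they are still Hamiltonian cycles, possibly reusing an old edge. The minimality comparison is unaffected, so the argument goes through.
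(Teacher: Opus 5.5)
Your proof is correct. It uses the same scheme as the paper: apply Lemma~\ref{lm:bimedian} to the pairs $e_1,e_3$ and $e_2,e_4$, then bound the eight cross terms using minimality of $S_2(H)$ against rewired Hamiltonian cycles.

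The difference is in how you group the eight terms. The paper uses only two four-edge exchanges:
\[
F_1 = \{\a_1\a_3, \b_1\b_3, \a_2\b_4, \b_2\a_4\}, \qquad F_2 = \{\a_1\b_3, \b_1\a_3, \a_2\a_4, \b_2\b_4\}.
\]
Each combines the 2-opt reconnection of one pair with the crossed, subtour-creating reconnection of the other pair. Since $S_2(F_1) + S_2(F_2)$ is exactly the eight-term sum, the two inequalities $S_2(F_i) \geq S_2(F)$ finish the proof in one line.

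You instead use the two ordinary 2-opt moves together with the double bridge, which performs both crossed reconnections at once. Your three inequalities sum to the same bound. The paper's grouping is slightly more economical and symmetric, needing two comparisons instead of three. Yours separates out the familiar pairwise 2-opt conditions and confines the genuinely four-edge move to the crossed terms. For this lemma the two are interchangeable, and yours would carry over just as easily to the path-family version in Proposition~\ref{prop:PF-4-edges}. Your handling of the degenerate cases is also fine.
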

\begin{proof}
Let $F := \{e_1, e_2, e_3, e_4\}$, $e_i = \a_i\b_i$, $1 \leq i \leq 4$, such that $H = F \cup P_{\b_1\a_2} \cup P_{\b_2\a_3} \cup P_{\b_3\a_4} \cup P_{\b_4\a_1}$, where $P_{\x\y}$ is a (possibly empty) path with endpoints $\x$ and $\y$, see Figure~\ref{fig:altHC} (left).
Denote $F_1 := \{\a_1\a_3, \b_1\b_3, \a_2\b_4, \b_2\a_4\}$, $F_2 := \{ \a_1\b_3, \b_1\a_3, \a_2\a_4, \b_2\b_4\}$, $H_1 := (H \setminus F) \cup F_1$ and $H_2 := (H \setminus F) \cup F_2$.
Note that $H_1$ and $H_2$ are also Hamiltonian cycles on $X$, see Figure~\ref{fig:altHC} (center and right).
It follows that $S_2(H_1) \geq S_2(H)$ and $S_2(H_2) \geq S_2(H)$, thus $S_2(F_1) \geq S_2(F)$ and $S_2(F_2) \geq S_2(F)$.

\begin{figure}[!ht]
\begin{center}
\input{Figures/altHC}
\end{center}
\caption{Hamiltonian cycles differing only in edges with both endpoints in $\{\a_1,\b_1,\a_2,\b_2,\a_3,\b_3,\a_4,\b_4\}$.}
\label{fig:altHC}
\end{figure}

Applying Lemma~\ref{lm:bimedian} two times, for points $\a_1, \b_1, \a_3, \b_3$, and for points $\a_2, \b_2, \a_4, \b_4$, we get
\[
|\m_{e_1} - \m_{e_3}|^2 + |\m_{e_2} - \m_{e_4}|^2 = \frac{S_2(F_1) + S_2(F_2) - S_2(F)}{4} \geq \frac{S_2(F)}{4} = \frac{|e_1|^2 + |e_2|^2 + |e_3|^2 + |e_4|^2}{4}.\qedhere
\]
\end{proof}

As a consequence of Lemma~\ref{lm:HC-4-edges}, under the same notation, either $|\m_{e_1} - \m_{e_3}|^2 \geq \frac{|e_1|^2 + |e_3|^2}{4}$, or $|\m_{e_2} - \m_{e_4}|^2 \geq \frac{|e_2|^2 + |e_4|^2}{4}$, i.e. either $|\m_{e_1} - \m_{e_3}| \geq \frac{|e_1| + |e_3|}{\sqrt{8}}$, or $|\m_{e_2} - \m_{e_4}| \geq \frac{|e_2| + |e_4|}{\sqrt{8}}$.
It follows that $\mathfrak{B}_\alpha^H$ is a 3-fold packing for $\alpha = \frac{1}{\sqrt{8}}$ and any $S_2$-minimal Hamiltonian cycle $H$ on $X$.
Combining this with Lemma~\ref{lm:hb-pack}, we already get $\SHC \leq 6 \cdot 2^{k / 2} \cdot k^{k / 2}$, which is exponentially stronger than all previously known upper bounds on $\SHC$, including the asymptotic ones.

As in previous section, by considering $t$-fold packings with larger $t$, we can push this even further.
One way to do this is to apply Lemma~\ref{lm:sph-code-bound} again.
However, we prove Theorem~\ref{thm:HC-exact} by an alternative and, in a sense, more elementary approach.
Stated in the form of the following lemma, it uses properties of \textit{centroids} of finite point sets.
For the sake of completeness, in Appendix~\ref{ap:alt} we give alternative proofs of exact upper bounds on $\SHC$ using Lemma~\ref{lm:sph-code-bound}, and on $\SPM$ using Lemma~\ref{lm:n-points-dist}.
Both turn out to be somewhat weaker than Theorems~\ref{thm:HC-exact} and~\ref{thm:PM-exact}.

\begin{lemma}\label{lm:n-points-dist}
Let $n \geq 1$ and $\a_1, \dots, \a_n \subseteq \R^k$.
For any $\x \in \R^k$,
\[
\sum_{i = 1}^n |\a_i - \x|^2 \geq \frac{1}{n}\sum_{i < j} |\a_i - \a_j|^2,
\]
with equality only for $\x = \m$, where $\m := \frac{1}{n}\sum_{i = 1}^n \a_i$ is the centroid of points $\a_1, \dots, \a_n$.
\end{lemma}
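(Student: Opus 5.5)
The plan is to use the parallel-axis (Huygens--Steiner) decomposition about the centroid $\m$, combined with the classical identity expressing the sum of pairwise squared distances through the squared distances to the centroid.

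First I would show that for any $\x \in \R^k$,
\[
\sum_{i=1}^n |\a_i - \x|^2 = \sum_{i=1}^n |\a_i - \m|^2 + n|\m - \x|^2 .
\]
To see this, write $\a_i - \x = (\a_i - \m) + (\m - \x)$ and expand. The cross term is $2\langle \sum_i (\a_i - \m), \m - \x\rangle$, and it vanishes because $\sum_i (\a_i - \m) = \fzero$ by the definition of $\m$. This identity already gives the equality statement: the left-hand side is at least $\sum_i |\a_i - \m|^2$, with equality exactly when $|\m - \x| = 0$, i.e.\ when $\x = \m$.

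Second, I would show that $\sum_{i=1}^n |\a_i - \m|^2 = \frac{1}{n}\sum_{i<j}|\a_i - \a_j|^2$. The cleanest route is to sum the identity from the first step over $\x = \a_j$, $j = 1,\dots,n$. The left-hand side becomes $\sum_{i,j}|\a_i - \a_j|^2 = 2\sum_{i<j}|\a_i-\a_j|^2$, while the right-hand side becomes
\[
n\sum_i|\a_i-\m|^2 + n\sum_j|\a_j-\m|^2 = 2n\sum_i|\a_i-\m|^2 .
\]
Dividing both sides by $2n$ gives the claimed expression. Substituting it into the first step yields the inequality in Lemma~\ref{lm:n-points-dist}, with equality only at $\x = \m$.

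There is no real obstacle here, since every step is routine expansion of squared norms. The only places that need care are checking that the cross term vanishes and keeping the factor $2$ straight when converting between ordered and unordered pairs in the second step.
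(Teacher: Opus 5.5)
Your proof is correct and establishes exactly the identity the paper proves, $\sum_{i}|\a_i-\x|^2 = \frac{1}{n}\sum_{i<j}|\a_i-\a_j|^2 + n|\m-\x|^2$, from which the inequality and the equality case follow. The paper gets this identity in one direct dot-product expansion, using the expansion of $|\m|^2$. You instead split it into the Huygens--Steiner decomposition and then average that decomposition over $\x = \a_j$; this is only a different bookkeeping of the same argument.
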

\begin{proof}
Note that
\[
|\m|^2 = \langle \m, \m \rangle = \sum_{i = 1}^n \frac{|\a_i|^2}{n^2} + \sum_{i < j} \frac{2\langle \a_i, \a_j \rangle}{n^2}.
\]
Then
\begin{align*}
\sum_{i = 1}^n |\a_i - \x|^2 - \frac{1}{n}\sum_{i < j} |\a_i - \a_j|^2 &= n|\x|^2 - 2\sum_{i = 1}^n \langle \a_i, \x \rangle + \sum_{i = 1}^n |\a_i|^2 - \frac{n - 1}{n}\sum_{i = 1}^n |\a_i|^2 + \sum_{i < j} \frac{2\langle \a_i, \a_j \rangle}{n}\\
&= n|\x|^2 - 2n\langle \m, \x \rangle + n|\m|^2 = n|\m - \x|^2 \geq 0,
\end{align*}
with equality only when $\x = \m$.
\end{proof}

\begin{proposition}\label{prop:tsp-centroid}
Let $X \subseteq \R^k$ be a finite set, $H$ be a Hamiltonian cycle on $X$ with minimal possible $S_2(H)$, $t \geq 1$ be a positive integer and $\alpha := \sqrt{\frac{t}{4(t + 1)}}$.
Then $\mathfrak{B}_\alpha^H$ is a $(2t + 1)$-fold packing.
\end{proposition}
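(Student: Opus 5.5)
Suppose, for contradiction, that some point $\x \in \R^k$ lies in $2t + 2$ of the balls $\B_\alpha^e$, $e \in H$. Write $n := 2t + 2$ and let $e_1, \dots, e_n$ be these edges, numbered in the order in which they appear on $H$. Put $\m_i := \m_{e_i}$ and $r_i := |e_i|$, so that $|\m_i - \x|^2 < \alpha^2 r_i^2$ for every $i$.

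The plan is to pair edge $e_i$ with the "opposite" edge $e_{i + t + 1}$ (indices mod $n$). For each $i \in \{1, \dots, t+1\}$, consider the quadruple $e_i, e_j, e_{i+t+1}, e_{j+t+1}$ with $i < j \leq t + 1$. These four edges appear on the cycle in the order $i < j < i+t+1 < j+t+1$, so Lemma~\ref{lm:HC-4-edges} applies with $e_i, e_{i+t+1}$ as the first antipodal pair and $e_j, e_{j+t+1}$ as the second. Writing $D_i := |\m_i - \m_{i+t+1}|^2$ and $R_i := r_i^2 + r_{i+t+1}^2$, the lemma gives
\[
D_i + D_j \geq \frac{R_i + R_j}{4} \quad \text{for all } i \neq j.
\]
Hence at most one index $i$ can have $D_i < R_i/4$. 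Call it the bad index, if one exists.

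For the contradiction, apply Lemma~\ref{lm:n-points-dist} to the $n$ midpoints with the point $\x$. This gives
\[
\alpha^2 \sum_{i=1}^n r_i^2 > \sum_{i=1}^n |\m_i - \x|^2 \geq \frac{1}{n} \sum_{i<j} |\m_i - \m_j|^2 .
\]
The right-hand side needs a lower bound. The obstacle is that Lemma~\ref{lm:HC-4-edges} only controls sums over crossing pairs, not individual distances. I would sum its inequality over all crossing quadruples of indices, that is, over all $a < b < c < d$ with the pairs $(a,c)$ and $(b,d)$. A pair $(a,c)$ lies in such a quadruple whenever $c - a \geq 2$ and $n - (c-a) \geq 2$, with multiplicity $(c - a - 1)(n - c + a - 1)$. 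By symmetric counting, summing the lemma over all quadruples and then dividing yields a weighted bound
\[
\sum_{a<c} w_{ac}\, |\m_a - \m_c|^2 \geq \frac14 \sum_{a<c} w_{ac} \,(r_a^2 + r_c^2).
\]

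To convert this weighted bound into the unweighted sum I need, the simplest route is to drop the weights. For that, I would use only the opposite pairs plus cruder bounds, and check whether $\alpha^2 = t/(4(t+1))$ comes out exactly.

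Simpler attempt: for each $i$, $|\m_i - \x|^2 + |\m_{i+t+1} - \x|^2 \geq D_i/2$. So, excluding the bad index,
\[
\sum_i |\m_i - \x|^2 \geq \sum_{i \text{ good}} \frac{R_i}{8}.
\]
The bad index costs at most $R_b$ on the left, and we have $\alpha^2 \sum_i R_i$ available. This only works if $\alpha^2 < 1/8$, and it does not produce the factor $t/(t+1)$. The factor $t/(t+1)$ instead suggests centroids of $t + 1$ points.

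Refined plan: apply Lemma~\ref{lm:n-points-dist} separately to the $t+1$ "first" midpoints $\m_1, \dots, \m_{t+1}$ and to the $t+1$ "second" midpoints $\m_{t+2}, \dots, \m_{2t+2}$. Then use Lemma~\ref{lm:HC-4-edges} on each quadruple $i < j < i+t+1 < j+t+1$ with the cross pairs $(i, i+t+1)$ and $(j, j+t+1)$. I expect the main difficulty to be choosing which quadruples and which centroid groupings to use so that the inequalities sum to
\[
\sum_i |\m_i - \x|^2 \geq \frac{t}{4(t+1)} \sum_i r_i^2 .
\]
Summing $D_i + D_j \geq (R_i + R_j)/4$ over all pairs $i < j$ in $\{1, \dots, t+1\}$ gives
\[
t \sum_i D_i \geq \frac{t}{4} \sum_i R_i .
\]
Combined with Lemma~\ref{lm:n-points-dist} applied to each two-point set $\{\m_i, \m_{i+t+1}\}$, this gives
\[
\sum_{i=1}^{n} |\m_i - \x|^2 \geq \frac12 \sum_i D_i \geq \frac18 \sum_i R_i .
\]
Getting from $1/8$ to the sharp $t/(4(t+1))$ requires exploiting the non-opposite pairs, with weights tuned so that the total becomes $\frac{t}{t+1} \cdot \frac14$. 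I would first try applying Lemma~\ref{lm:n-points-dist} to all $2t+2$ points and bounding $\sum_{a<c} |\m_a - \m_c|^2$ via quadruple sums, choosing the quadruples so that every pair is counted uniformly.
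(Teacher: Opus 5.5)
Your proposal stops short of a proof for $t\ge 2$. The only chain you complete is $\sum_i|\m_i-\x|^2\ge\frac12\sum_i D_i\ge\frac18\sum_i R_i$. It contradicts $|\m_i-\x|^2<\alpha^2 r_i^2$ only when $\alpha^2\le\frac18$, i.e.\ $t=1$. The step that should produce $\frac{t}{4(t+1)}$ is left open, and both routes you sketch for it fail.

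\emph{The halves grouping.} Take the groups $\{\m_1,\dots,\m_{t+1}\}$ and $\{\m_{t+2},\dots,\m_{2t+2}\}$. Lemma~\ref{lm:n-points-dist} then produces distances \emph{inside} a half. Your quadruples $i<j<i+t+1<j+t+1$ only bound the opposite pairs, which run \emph{between} the halves, so the two inequalities never connect.

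\emph{Uniform counting over all pairs.} Two edges that are consecutive among $e_1,\dots,e_n$ are never a crossing pair of any quadruple; your multiplicity $(c-a-1)(n-c+a-1)$ is $0$ for them. Their midpoints can also be arbitrarily close relative to their lengths: take cycle edges $\a\b$, $\b\c$ with $\a,\c$ close and $\b$ far. So no choice of quadruples counts every pair uniformly, and a centroid bound over any group containing such a pair is useless. Summing over all quadruples with equal weights is also too lossy. For $t=2$, let $\m_1-\x,\dots,\m_6-\x$ be unit vectors at angles $2\pi a/6$ in a plane, and set all $r_a^2=6.5$. Then your summed weighted inequality holds ($102\ge 97.5$), and so does $|\m_a-\x|^2<\frac16 r_a^2$.

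The missing idea is to split by \emph{parity} rather than into halves. Apply Lemma~\ref{lm:n-points-dist} separately to $\{\m_1,\m_3,\dots,\m_{2t+1}\}$ and to $\{\m_2,\m_4,\dots,\m_{2t+2}\}$; each has $t+1$ points. Then apply Lemma~\ref{lm:HC-4-edges} to the quadruples $e_{2i-1},e_{2i},e_{2j-1},e_{2j}$ for $1\le i<j\le t+1$, which are in cyclic order. Their crossing pairs $(2i-1,2j-1)$ and $(2i,2j)$ are exactly the same-parity pairs, each occurring once. Each edge lies in exactly $t$ of these quadruples, so
\[
\sum_{a=1}^{2t+2}|\m_a-\x|^2\ \ge\ \frac{1}{t+1}\sum_{1\le i<j\le t+1}\bigl(|\m_{2i-1}-\m_{2j-1}|^2+|\m_{2i}-\m_{2j}|^2\bigr)\ \ge\ \frac{1}{t+1}\cdot\frac{t}{4}\sum_{a=1}^{2t+2} r_a^2 .
\]
This contradicts the strict bound $\sum_a|\m_a-\x|^2<\frac{t}{4(t+1)}\sum_a r_a^2$, and it is precisely the paper's argument. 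You were right that $t/(t+1)$ signals centroids of $t+1$ points; the choice of which $t+1$ points is what makes the crossing pairs of Lemma~\ref{lm:HC-4-edges} line up with the pairs in the centroid bound.
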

\begin{proof}
Fix any point $\x \in \R^k$ and let it be contained in $2n$ balls $\B_{\alpha}^{e_1}, \B_\alpha^{f_1}, \dots, \B_{\alpha}^{e_n}, \B_\alpha^{f_n}$, with the edges appearing on the cycle in the order $e_1, f_1, \dots, e_n, f_n \in H$.
We have
\[
\sum_{i = 1}^n \left( |\m_{e_i} - \x|^2 + |\m_{f_i} - \x|^2 \right) < \sum_{i = 1}^n\alpha^2 \left( |e_i|^2 + |f_i|^2 \right) = \frac{t}{4(t + 1)} \sum_{i = 1}^n \left( |e_i|^2 + |f_i|^2 \right).
\]
On the other hand, applying Lemma~\ref{lm:n-points-dist} two times for the sets of points $\{\m_{e_i}\}_{i = 1}^n$ and $\{\m_{f_i}\}_{i = 1}^n$, and then Lemma~\ref{lm:HC-4-edges} for edges $e_i, f_i, e_j, f_j$ for all $i < j$, we get
\begin{align*}
\sum_{i = 1}^n \left( |\m_{e_i} - \x|^2 + |\m_{f_i} - \x|^2 \right) &\geq \sum_{i < j} \frac{|\m_{e_i} - \m_{e_j}|^2 + |\m_{f_i} - \m_{f_j}|^2}{n}\\
&\geq \sum_{i < j} \frac{|e_i|^2 + |e_j|^2 + |f_i|^2 + |f_j|^2}{4n} = \frac{n - 1}{4n} \sum_{i = 1}^n \left( |e_i|^2 + |f_i|^2 \right).    
\end{align*}
It follows that $\frac{1}{4} - \frac{1}{4n} = \frac{n - 1}{4n} < \frac{t}{4(t + 1)} = \frac{1}{4} - \frac{1}{4(t + 1)}$, which implies $n < t + 1$, i.e. no point is contained in $2(t + 1)$ balls of $\mathfrak{B}_\alpha^H$.
\end{proof}

\begin{proof}[Proof of Theorem~\ref{thm:HC-exact}\ref{HC-exact:t}]
After a parallel translation, we may assume that $X \subseteq \left[ -\frac{1}{2}, \frac{1}{2} \right]^k$.
Let $H$ be a Hamiltonian cycle on $X$ which minimizes $S_2(H)$ and let $\alpha = \sqrt{\frac{t}{4(t + 1)}}$.
By Proposition~\ref{prop:tsp-centroid}, $\mathfrak{B}_\alpha^H$ is a $(2t + 1)$-fold packing, then so is $\mathfrak{hB}_\alpha^H$.
Applying Lemma~\ref{lm:hb-pack} with $E = H$, we get
\[
S_k(H) \leq \frac{2 (2t + 1)}{\left( 2 \cdot \sqrt{\frac{t}{4(t + 1)}} \right)^k} \cdot k^{k / 2} = 2 (2t + 1) \cdot \left( 1 + \frac{1}{t} \right)^{k / 2} \cdot k^{k / 2}. \qedhere
\]
\end{proof}

\begin{lemma}\label{lm:tk-bound}
For any integer $k \geq 1$, there is an integer $t \geq 1$ such that $(2t + 1) \cdot \left( 1 + \frac{1}{t} \right)^{k / 2} \leq (k + 1)\mathrm{e}$,
where $\mathrm{e} = 2.718..$ is Euler's number.
\end{lemma}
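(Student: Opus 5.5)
Choose $t$ of order $k/2$; the only tool needed is the elementary inequality $1 + x \leq \mathrm{e}^x$. Applied with $x = 1/t$, it gives
\[
\left(1 + \tfrac{1}{t}\right)^{k/2} \leq \mathrm{e}^{k/(2t)}.
\]
It therefore suffices to find an integer $t \geq 1$ with $(2t+1)\,\mathrm{e}^{k/(2t)} \leq (k+1)\mathrm{e}$. We check this separately for even and odd $k$.

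\textbf{Even $k = 2m$, $m \geq 1$.} Take $t = m$. Then $2t + 1 = k + 1$ and $k/(2t) = 1$, so the left-hand side is at most $(k+1)\mathrm{e}$ exactly.

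\textbf{Odd $k = 2m+1$, $m \geq 1$.} Take $t = m+1$. Then $2t+1 = k+2$ and $k/(2t) = \frac{2m+1}{2m+2} = 1 - \frac{1}{k+1}$. The left-hand side is thus at most
\[
(k+2)\,\mathrm{e}\cdot \mathrm{e}^{-1/(k+1)}.
\]
We need $(k+2)\,\mathrm{e}^{-1/(k+1)} \leq k+1$, which is equivalent to
\[
\mathrm{e}^{1/(k+1)} \geq \frac{k+2}{k+1} = 1 + \frac{1}{k+1}.
\]
This is again $1 + x \leq \mathrm{e}^x$, now with $x = 1/(k+1)$.

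\textbf{The case $k = 1$.} The odd-case choice would be $t = 1$ with $m = 0$, so we verify it directly: $3\sqrt{2} \approx 4.24 \leq 2\mathrm{e} \approx 5.44$. The formula from the odd case with $t = 1$ also covers it.

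The only mildly delicate step is the odd case, where $2t+1$ overshoots $k+1$ by one. The point is that the exponent $k/(2t)$ then drops below $1$ by exactly $\frac{1}{k+1}$, and this saving compensates for the extra factor $\frac{k+2}{k+1}$.

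Combined with part~\ref{HC-exact:t} of Theorem~\ref{thm:HC-exact}, this choice of $t$ yields part~\ref{HC-exact:centroid}.
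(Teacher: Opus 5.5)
Your proof is correct. You choose the same $t$ as the paper in every case: $t = k/2$ for even $k$ and $t = (k+1)/2$ for odd $k$. The even case is identical; the odd case is estimated differently.

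The paper writes $(1+\frac1t)^{t-1/2} = (1+\frac1t)^t\sqrt{t/(t+1)}$ and invokes the sharper bound $(1+\frac1n)^n \le \mathrm{e}\,(1-\frac{1}{2n}+\frac{1}{2n^2})$, which it quotes without proof. It then verifies $\frac{4t^3+t+1}{4t^2\sqrt{t(t+1)}}\le 1$, which needs $t\ge 2$, so $k=1$ has to be checked numerically on its own.

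You instead bound the whole power by $\mathrm{e}^{1-1/(2t)}$ at once. You then observe that what remains is exactly $1+\frac{1}{2t}\le \mathrm{e}^{1/(2t)}$. So the single inequality $1+x\le \mathrm{e}^x$ handles every odd $k$, including $k=1$.

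Your intermediate estimate $\mathrm{e}\,(1-\frac{1}{2t}+O(t^{-2}))$ is cruder than the paper's, which captures the true behaviour $(1+\frac1t)^{t-1/2} = \mathrm{e}\,(1-\frac1t+O(t^{-2}))$. But the target $\frac{2t}{2t+1}\,\mathrm{e}$ only requires the $-\frac{1}{2t}$ correction. Nothing is lost for this statement, and your argument is shorter and fully self-contained.
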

\begin{proof}
If $k$ is even, using the inequality $\left( 1 + \frac{1}{n} \right)^n \leq \mathrm{e}$, for $t = k / 2$ we have $(2t + 1) \cdot \left( 1 + \frac{1}{t} \right)^t \leq (k + 1) \mathrm{e}$.
If $k = 1$, for $t = 1$ we have $3 \sqrt{2} \leq 2 \mathrm{e}$.
If $k \geq 3$ is odd, for $t$ such that $k = 2t - 1$, using the inequality $\left( 1 + \frac{1}{n} \right)^n \leq \mathrm{e} \cdot \left( 1 - \frac{1}{2n} + \frac{1}{2n^2} \right)$,
\begin{align*}
(2t + 1) \cdot \left( 1 + \frac{1}{t} \right)^{t - \frac{1}{2}} &\leq (2t + 1) \cdot \frac{k + 1}{2t} \cdot \sqrt{\frac{t}{t + 1}} \cdot \mathrm{e} \cdot \left( 1 - \frac{1}{2t} + \frac{1}{2t^2} \right)\\
&= (k + 1)\mathrm{e} \cdot \frac{(2t + 1) \cdot (2t^2 - t + 1)}{4t^2 \sqrt{t(t + 1)}}.
\end{align*}
Since $k \geq 3$, we have $t \geq 2$ and thus
\[
\frac{(2t + 1) (2t^2 - t + 1)}{4t^2 \sqrt{t(t + 1)}} = \frac{4t^3 + t + 1}{4t^2 \sqrt{t(t + 1)}} \leq \frac{4t^3 + t^2}{4t^2 \sqrt{t(t + 1)}} = \frac{4t + 1}{4\sqrt{t(t + 1)}} \leq 1,
\]
as $t + 1 \leq t^2$ for $t \geq \frac{\sqrt{5} + 1}{2}$, and $(4t + 1)^2 = 16t^2 + 8t + 1 \leq 16t^2 + 16t = \left( 4\sqrt{t(t + 1)} \right)^2$ for $t \geq \frac{1}{8}$.
\end{proof}

\begin{proof}[Proof of Theorem~\ref{thm:HC-exact}\ref{HC-exact:centroid}]
Apply Theorem~\ref{thm:HC-exact}\ref{HC-exact:t} with $t$ given by Lemma~\ref{lm:tk-bound}.
\end{proof}

\section{Asymptotically optimal upper bound}
\label{sec:asymp}

The proof of Theorem~\ref{thm:HP-asymp} below is structured similarly to proofs of Theorems~1.5 and~1.6 in~\cite{baloghTravelingSalesmanProblem2024}.
The following two lemmas correspond to the case $m = 3$ in Lemma~3.3 and to Lemma~4.3 in~\cite{baloghTravelingSalesmanProblem2024}.

\begin{lemma}[Balogh, Clemen and Dumitrescu]\label{lm:3-points}
For any $X \subseteq [0, 1]^k$ with $|X| \geq 3$, there are two distinct points $\p, \q \in X$ with $|\p - \q| \leq \sqrt{\frac{2k}{3}}$.
\end{lemma}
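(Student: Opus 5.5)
It suffices to treat the case $|X| = 3$: choose any three points $\a_1, \a_2, \a_3 \in X$ and show that one of the three pairwise distances is at most $\sqrt{2k/3}$. The plan is to bound the sum of squared pairwise distances coordinatewise and then average.

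Write $\a_i = (a_{i,1}, \dots, a_{i,k})$. Then
\[
\sum_{i < j} |\a_i - \a_j|^2 = \sum_{l = 1}^k \sum_{i < j} (a_{i,l} - a_{j,l})^2 .
\]
Everything therefore reduces to a one-dimensional statement: for any $x, y, z \in [0, 1]$,
\[
(x - y)^2 + (y - z)^2 + (x - z)^2 \leq 2 .
\]

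To prove this, reorder so that $0 \le x \le y \le z \le 1$ and set $u = y - x \ge 0$, $v = z - y \ge 0$ with $u + v \le 1$. The sum becomes
\[
u^2 + v^2 + (u + v)^2 = 2(u + v)^2 - 2uv \le 2(u + v)^2 \le 2 .
\]
Equivalently, by Lemma~\ref{lm:n-points-dist} with $n = 3$ the sum equals $3\sum_i (x_i - m)^2$, where $m$ is the mean, and this is maximized at the extremes of the interval.

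Summing over the $k$ coordinates gives $\sum_{i<j} |\a_i - \a_j|^2 \le 2k$. Hence the smallest of the three squared distances is at most the average, $2k/3$, and the corresponding pair $\p, \q$ satisfies $|\p - \q| \le \sqrt{2k/3}$.

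No step is a real obstacle. The only point needing care is the one-dimensional bound, which the ordering substitution above settles directly. Equality holds, for example, when two of the three coordinates are $0$ and one is $1$ in every coordinate.
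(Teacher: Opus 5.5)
Your proof is correct and complete. The one-dimensional bound $(x-y)^2+(y-z)^2+(x-z)^2 = 2(u+v)^2-2uv \le 2$ is verified correctly. Summing over coordinates gives $\sum_{i<j}|\a_i-\a_j|^2\le 2k$, and the smallest of the three squared distances is at most their average $2k/3$.

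The paper gives no proof of this lemma. It imports it from Balogh, Clemen and Dumitrescu as the case $m=3$ of their Lemma~3.3, so there is no in-paper argument to compare yours against. Your coordinatewise averaging is a natural self-contained proof, and it extends verbatim to $m$ points. For $m$ reals in $[0,1]$, the quantity $\sum_{i<j}(x_i-x_j)^2$ is a convex function maximized at a vertex of $[0,1]^m$, hence is at most $\lfloor m^2/4\rfloor$. This yields two of the $m$ points at squared distance at most $k\lfloor m^2/4\rfloor/\binom{m}{2}$, which for $m=3$ is exactly $2k/3$.

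One small clarification: the equality case you describe is equality in the one-dimensional inequality. Equality in the lemma itself additionally needs all three squared distances to equal $2k/3$, for instance the three standard basis vectors when $k=3$.
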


\begin{lemma}[Balogh, Clemen and Dumitrescu]\label{lm:0.999}
There exists $k_0$ such that for all integers $k \geq k_0$ the following holds.
Let $0 < \alpha < 0.99$ and let $Y \subseteq [0, 1]^k$ such that $|\u - \vv| > \alpha \sqrt{k}$ for every two distinct points $\u, \vv \in Y$.
Then $|Y| \cdot \alpha^k \leq 0.999^k$.
\end{lemma}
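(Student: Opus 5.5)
The plan is a volume-packing argument with the usual Steiner-formula bookkeeping. I will optimise the constants only as far as needed to beat $0.999^k$ for large $k$.

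\textbf{Setup.} Put $r := \frac{\alpha\sqrt{k}}{2}$. By the separation hypothesis, the open balls $\B_r(\u)$, $\u \in Y$, are pairwise disjoint. They all lie in the Minkowski sum $C_r := [0,1]^k + \B_r(\fzero)$. Hence
\[
|Y| \cdot V_k r^k \leq \mathrm{vol}(C_r) = \sum_{j=0}^{k} \binom{k}{j} V_j r^j,
\]
where the equality is Steiner's formula for the cube. Dividing by $V_k r^k$ and multiplying by $\alpha^k$ gives
\[
|Y|\,\alpha^k \leq \sum_{j=0}^{k}\binom{k}{j}\frac{V_j}{V_k}\, r^{j-k}\alpha^k .
\]
Since $r^{j-k}\alpha^k \propto \alpha^j k^{(j-k)/2}$, each term is increasing in $\alpha$. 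So it suffices to treat $\alpha = 0.99$.

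\textbf{Exponential growth of each term.} Write $j = \beta k$. Stirling's formula gives $V_j = (2\pi \mathrm{e}/j)^{j/2}\cdot \mathrm{poly}(k)$. From this, the $j$-th term equals $\exp\bigl(k\, g(\beta) + o(k)\bigr)$, uniformly in $\beta$, where
\[
g(\beta) = H(\beta) + \tfrac{\beta}{2}\ln\!\Bigl(\tfrac{\pi\mathrm{e}\alpha^2}{2\beta}\Bigr) - \tfrac12\ln\!\Bigl(\tfrac{\pi\mathrm{e}}{2}\Bigr) + \ln\alpha .
\]
Here $H$ is the natural-log entropy function. There are only $k+1$ terms, so it suffices to show $\max_{\beta\in[0,1]} g(\beta) < \ln 0.999$ with a fixed margin. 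The $o(k)$ errors are then absorbed for $k \geq k_0$. The endpoint $\beta = 1$ gives $g(1) = \ln\alpha \leq \ln 0.99$, which is fine. This corresponds to the ball term $\alpha^k$ itself.

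\textbf{Main obstacle.} The hard step is the global maximisation of $g$. Near $\beta = 1$ the entropy term has derivative $\ln\frac{1-\beta}{\beta}\to-\infty$. So $g$ increases as $\beta$ moves down from $1$, and one must check numerically that this gain never overcomes the deficit $\ln(1/0.99)$ minus $\ln(1/0.999)$.

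I would first solve $g'(\beta) = 0$, which reads $\ln\frac{1-\beta}{\beta} + \frac12\ln\frac{\pi\mathrm{e}\alpha^2}{2\beta} - \frac12 = 0$. Then I would evaluate $g$ at the root. If this Steiner bound turns out to be too lossy, the fallback is to avoid enlarging the cube. Instead I would lower-bound $\mathrm{vol}(\B_r(\u)\cap[0,1]^k)$ for centres $\u$ in the cube, using the fact that a ball of radius $\approx 0.5\sqrt{k}$ around a cube point meets the cube in a fraction at least $c^k$ of its volume. This yields $|Y|\,\alpha^k \leq \alpha^k/(c^k V_k r^k)$, and the remaining task would again be a one-variable numerical check.
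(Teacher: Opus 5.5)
Your plan has a genuine gap: both routes reduce to a numerical check that is false for large $\alpha$. The paper only quotes this lemma from Balogh, Clemen and Dumitrescu, so I judge your argument on its own.

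\textbf{The Steiner route fails for a structural reason, not for lack of tuning.} By Brunn--Minkowski, $\mathrm{vol}(C_r)^{1/k} \geq 1 + V_k^{1/k} r$. Moreover $V_k^{1/k} r = (1+o(1))\sqrt{\pi\mathrm{e}/2}\,\alpha \approx 2.066\,\alpha$. So your bound $\mathrm{vol}(C_r)\,\alpha^k/(V_k r^k)$ is at least $(\alpha + 0.484 - o(1))^k$, which exceeds $1$ once $\alpha > 0.52$.

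Concretely, at $\alpha = 0.99$ the maximum of $g$ is about $0.53$, near $\beta \approx 0.61$. That is a bound of order $1.7^k$, and the method already breaks down around $\alpha \approx 0.35$. Inflating the cube by a radius comparable to its circumradius costs an exponential factor that nothing can absorb. Separately, your $g$ carries a spurious $+\ln\alpha$: all the $\alpha$-dependence is already in $\frac{\beta}{2}\ln\alpha^2$, and that is what gives $g(1)=\ln\alpha$.

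\textbf{Your fallback is the right idea, but as stated it also fails at the top of the range.}
\begin{itemize}
\item The worst centre is a vertex. You can justify this via log-concavity of $\u \mapsto \mathrm{vol}(\B_r(\u)\cap[0,1]^k)$ together with the symmetries $u_i \mapsto 1-u_i$.
\item Since $\alpha^k/(V_k r^k) = 2^k/(V_k k^{k/2})$, your inequality reads $|Y|\alpha^k \leq ((2.066-o(1))\,c)^{-k}$. So you need $c \geq 0.4844$.
\item For small $\alpha$ the vertex ball is essentially an orthant, so $c = 1/2$ and you get about $0.968^k$.
\item Near $\alpha = 0.99$, however, Cram\'er's theorem for $\Pr[\sum_i x_i^2 < 0.245k]$, with $\x$ uniform in the cube, gives $\mathrm{vol}(\B_r(\fzero)\cap[0,1]^k) = \mathrm{e}^{-(0.047+o(1))k}$. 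That means $c \approx 0.466$, and you only get $|Y|\alpha^k \lesssim 1.038^k$.
\item The check fails for all $\alpha$ above roughly $0.88$.
\end{itemize}

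\textbf{The missing ingredient is a separate argument for large $\alpha$.} Three reals in $[0,1]$ have squared pairwise differences summing to at most $2$. Summing over coordinates, some pair among any three points of the cube is within $\sqrt{2k/3}$; this is Lemma~\ref{lm:3-points}. Hence $|Y| \leq 2$ whenever $\alpha \geq \sqrt{2/3}$, and then $|Y|\alpha^k \leq 2\cdot 0.99^k \leq 0.999^k$.

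For $\alpha \leq \sqrt{2/3}$ your fallback then goes through, and no uniformity issue arises. The fraction of $\B_r(\fzero)$ lying in $[0,1]^k$ decreases in $r$: rescale it to $\B_1(\fzero)\cap[0,1/r]^k$. So it suffices to check the single value $\alpha = \sqrt{2/3}$. There Cram\'er gives a fraction of about $0.491^k > 0.4844^k$, leaving $|Y|\alpha^k \leq \mathrm{e}^{-(0.0147 - o(1))k}$.
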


For a finite set $X \subseteq \R^k$, we will call a set $E$ of edges on $X$ a \textit{path family of size $m$} if $|E| = m$ and $E$ is a union of one or several vertex-disjoint paths.

\begin{remark}\label{rm:PFinHC}
Hamiltonian paths on $X$ are precisely path families on $X$ of size $|X| - 1$.
More generally, for a Hamiltonian cycle $H$ on $X$, any subset $E \subsetneq H$ with $|E| = m < |X|$ is a path family on $X$ of size $m$.
\end{remark}

Arguments used in Section~\ref{sec:HC} for Hamiltonian cycles can be applied with minimal changes to path families of a fixed size.
In particular, we could prove an analogue of Proposition~\ref{prop:tsp-centroid} for path families of a fixed size.
For our purposes, however, the following would suffice.

\begin{proposition}\label{prop:PF-4-edges}
Let $X \subseteq \R^k$ and let $E$ be a path family of size $m$ on $X$ with minimal $S_2(E)$ among all path families of size $m$ on $X$.
Then $\mathfrak{B}_{1 / \sqrt{8}}^E$ is a 3-fold packing.
\end{proposition}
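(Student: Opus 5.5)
We would mirror the Hamiltonian cycle argument: first prove an analogue of Lemma~\ref{lm:HC-4-edges} for $S_2$-minimal path families, and then deduce the $3$-fold packing property exactly as was done after that lemma.

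\textbf{Step 1: a four-edge lemma for path families.} The claim is as follows. Let $E$ be a path family of size $m$ minimizing $S_2$, and take four edges of $E$. If all four lie on one path, number them in the order in which they appear on that path. Then
\[
|\m_{e_1} - \m_{e_3}|^2 + |\m_{e_2} - \m_{e_4}|^2 \geq \frac{|e_1|^2 + |e_2|^2 + |e_3|^2 + |e_4|^2}{4}
\]
holds for a suitable pairing $\{e_1,e_3\},\{e_2,e_4\}$ of the four edges.

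To prove it, we close the paths of $E$ into a cycle structure using auxiliary edges. Concatenate the paths of $E$ in some order, and connect consecutive path endpoints (and the isolated vertices not covered by $E$) by ``virtual'' edges. This gives a Hamiltonian cycle $C$ on $X$ in which $E$ is a set of $m$ non-virtual edges. Now apply the two reconnections $F \mapsto F_1$ and $F \mapsto F_2$ from the proof of Lemma~\ref{lm:HC-4-edges} to $C$. Each produces a Hamiltonian cycle $C_i$, and $(C \setminus F)\cup F_i$ minus the virtual edges is again a union of vertex-disjoint paths. It has exactly $m$ edges, because the virtual edges are untouched. It might not be a path family if some new edge coincides with a virtual one, but even then it has $m$ edges forming disjoint paths. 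Minimality therefore gives $S_2(F_i)\ge S_2(F)$ for $i=1,2$.

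The key checks are these. The virtual edges are never among $F$. A Hamiltonian cycle minus a set of edges that excludes at least one edge is a disjoint union of paths, as in Remark~\ref{rm:PFinHC}. Here $C_i$ minus the virtual edges omits at least one edge whenever a virtual edge exists. If no virtual edge exists, $E$ would be a full cycle, which is impossible since $E$ is a union of paths. Lemma~\ref{lm:bimedian} then yields the displayed inequality verbatim, with the pairing determined by the cyclic order of the four edges on $C$.

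\textbf{Step 2: the packing.} Suppose some point $\x$ lay in four balls $\B^{e_i}_{1/\sqrt8}$. Number the four edges in their cyclic order on $C$. By Step 1, either $|\m_{e_1}-\m_{e_3}|^2\ge (|e_1|^2+|e_3|^2)/4\ge (|e_1|+|e_3|)^2/8$, or the same holds for the pair $e_2,e_4$. Say it is $e_1,e_3$. Then
\[
|\m_{e_1}-\m_{e_3}|\ge \tfrac{|e_1|+|e_3|}{\sqrt8} > |\m_{e_1}-\x|+|\m_{e_3}-\x|,
\]
which contradicts the triangle inequality. Hence no point lies in four of the open balls, and $\mathfrak{B}^E_{1/\sqrt8}$ is a $3$-fold packing.

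\textbf{Main obstacle.} The delicate point is Step 1: we must verify that both reconnections keep the edge count at $m$ and yield vertex-disjoint paths. Working inside the auxiliary Hamiltonian cycle $C$ handles this uniformly, with no case analysis over which paths the four edges belong to. We therefore expect this step to require only careful bookkeeping.
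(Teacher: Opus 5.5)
Your proposal is correct and is essentially the paper's proof: embed $E$ in a Hamiltonian cycle, apply the two reconnections $F\mapsto F_1,F_2$ from Lemma~\ref{lm:HC-4-edges}, use that an $m$-edge proper subset of a Hamiltonian cycle is a path family to get $S_2(F_i)\ge S_2(F)$, and conclude via Lemma~\ref{lm:bimedian} that one of the pairs $\{e_1,e_3\}$, $\{e_2,e_4\}$ has disjoint balls. Your hedge about a new edge coinciding with a virtual edge is unnecessary: each edge of $F_1$ or $F_2$ has an edge of $F$ on both arcs of $C$ between its endpoints. Hence it never lies in $C\setminus F$, and $(E\setminus F)\cup F_i$ is a genuine path family of size $m$.
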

\begin{proof}
Let $H$ be any Hamiltonian cycle on $X$ such that $E \subseteq H$.
Let $e_1, e_2, e_3, e_4 \in E$ be any four distinct edges of $E$, numbered in the order in which they appear on the cycle $H$.
It is sufficient to show that the balls $\B_{1 / \sqrt{8}}^{e_i}$ and $\B_{1 / \sqrt{8}}^{e_j}$ are disjoint for some $i \neq j$.

Let $F := \{e_1, e_2, e_3, e_4\}$, $e_i = \a_i\b_i$, $1 \leq i \leq 4$, such that $H = F \cup P_{\b_1\a_2} \cup P_{\b_2\a_3} \cup P_{\b_3\a_4} \cup P_{\b_4\a_1}$, where $P_{\x\y}$ is a (possibly empty) path with endpoints $\x$ and $\y$.
Denote $F_1 := \{\a_1\a_3, \b_1\b_3, \a_2\b_4, \b_2\a_4\}$, $F_2 := \{ \a_1\b_3, \b_1\a_3, \a_2\a_4, \b_2\b_4\}$, $H_1 := (H \setminus F) \cup F_1$ and $H_2 := (H \setminus F) \cup F_2$.
Since $H_1$ and $H_2$ are also Hamiltonian cycles on $X$ (see Figure~\ref{fig:altHC}), by Remark~\ref{rm:PFinHC}, it follows that $E_1 := (E \setminus F) \cup F_1 \subseteq H_1$ and $E_2 := (E \setminus F) \cup F_2 \subseteq H_2$ are path families on $X$ of size $m$.
Then $S_2(E_1) \geq S_2(E)$ and $S_2(E_2) \geq S_2(E)$, so $S_2(F_1) \geq S_2(F)$ and $S_2(F_2) \geq S_2(F)$.
Applying Lemma~\ref{lm:bimedian} two times, for points $\a_1, \b_1, \a_3, \b_3$, and for points $\a_2, \b_2, \a_4, \b_4$, we get
\[
|\m_{e_1} - \m_{e_3}|^2 + |\m_{e_2} - \m_{e_4}|^2 = \frac{S_2(F_1) + S_2(F_2) - S_2(F)}{4} \geq \frac{S_2(F)}{4} = \frac{|e_1|^2 + |e_2|^2 + |e_3|^2 + |e_4|^2}{4}.
\]
Then, either for $i = 1$ and $j = 3$, or for $i = 2$ and $j = 4$, we have $|\m_{e_i} - \m_{e_j}|^2 \geq \frac{|e_i|^2 + |e_j|^2}{4} \geq \frac{(|e_i| + |e_j|)^2}{8}$, so $|\m_{e_i} - \m_{e_j}| \geq \frac{|e_i| + |e_j|}{\sqrt{8}}$.
As a consequence, the balls $\B_{1 / \sqrt{8}}^{e_i}$ and $\B_{1 / \sqrt{8}}^{e_j}$ are disjoint.
\end{proof}

\begin{proof}[Proof of Theorem~\ref{thm:HP-asymp}]
Let $k$ be sufficiently large.
For $0 \leq i < n$, let $E_i$ be a path family of size $i$ on $X$ with minimal $S_2(E_i)$ among all path families of size $i$ on $X$, and let $i_0$ be minimal $i$ such that $S_2(E_{i + 1}) - S_2(E_i) > \frac{1}{\sqrt{k}}$, or $i_0 = n - 1$ if there is no such $i$.

On the one hand, for any $e \in E_{i_0}$, the set $E_{i_0} \setminus \{e\}$ is a path family of size $i_0 - 1$ on $X$, thus
\[
S_2(E_{i_0}) - |e|^2 = S_2(E_{i_0} \setminus \{e\}) \geq S_2(E_{i_0 - 1}),
\]
i.e., by the definition of $i_0$,
\begin{equation}\label{eq:Ei0-edges-in}
|e|^2 \leq S_2(E_{i_0}) - S_2(E_{i_0 - 1}) \leq \frac{1}{\sqrt{k}} \quad\text{for any } e \in E_{i_0}.    
\end{equation}
On the other hand, for any two endpoints $\u, \vv \in X$ of different paths in $E_{i_0}$, the set $E_{i_0} \cup \{\u\vv\}$ is a path family of size $i_0 + 1$ on $X$, thus 
\[
S_2(E_{i_0}) + |\u\vv|^2 = S_2(E_{i_0} \cup \{\u\vv\}) \geq S_2(E_{i_0 + 1}),
\]
so, using the definition of $i_0$ again,
\begin{equation}\label{eq:Ei0-edges-out}
|\u\vv|^2 \geq S_2(E_{i_0 + 1}) - S_2(E_{i_0}) > \frac{1}{\sqrt{k}} \quad\text{ for any two endpoints } \u, \vv \in X \text{ of different paths in } E_{i_0}.
\end{equation}

Set 
\[
\ell := \left\lceil \log_{1 + \frac{1}{k}} \left( 0.9 k^\frac{3}{4} \right) \right\rceil = O(k \log k),\quad a_i := \frac{\left( 1 + \frac{1}{k} \right)^i}{k^\frac{3}{4}}
\]
for integers $0 \leq i \leq \ell$.
Note that $a_i < 0.9$ for $i < \ell$, and $a_\ell \geq 0.9$.

Construct a Hamiltonian path $P$ by starting from $E_{i_0}$ and successively adding the shortest edge connecting endpoints of two different paths.
For $0 \leq i \leq \ell$, let $P_i \subseteq P$ be the path family consisting of all edges $e \in P$ with $|e| \leq a_i \sqrt{k}$.
Then $P_0 \subseteq P_1 \subseteq \dots \subseteq P_\ell \subseteq P$, and for any two endpoints $\u, \vv \in X$ of different paths in $P_i$, we have $|\u - \vv| > a_i\sqrt{k}$.

Due to~\eqref{eq:Ei0-edges-in} and~\eqref{eq:Ei0-edges-out}, we have $P_0 = E_{i_0}$, so $P_0$ is $S_2$-minimal among all path families of size $i_0$.
Then, by Proposition~\ref{prop:PF-4-edges}, $\mathfrak{B}_{1/\sqrt{8}}^{P_0}$ is a 3-fold packing.
It is well-known that the volume $V_k$ of the $k$-dimensional unit ball is
\[
V_k = \begin{cases}
\frac{\pi^{k / 2}}{(k / 2)!} &\text{if } k \text{ is even},\\
\frac{2^k \cdot \pi^{(k - 1) / 2}((k - 1) / 2)!}{k!} &\text{if } k \text{ is odd}.
\end{cases}
\]
By Stirling's approximation, $V_k \sim \frac{1}{\sqrt{k\pi}}( \frac{2\pi\mathrm{e}}{k} )^{k / 2}$.
Since $|e| \leq a_0\sqrt{k} = k^{-1/4}$ for any $e \in P_0$, we have $\bigcup_{e \in P_0} \B_{1/\sqrt{8}}^e \subseteq \left[-k^{-1/4}, 1 + k^{-1/4} \right]$, so
\[
\sum_{e \in P_0} V_k \cdot \left( \frac{|e|}{\sqrt{8}} \right)^k \leq 3 \cdot \left( 1 + 2k^{-1/4} \right)^k, \quad\text{thus}\quad \sum_{e \in P_0} |e|^k \leq \frac{3 \cdot 8^{k / 2} \cdot \left( 1 + 2k^{-1/4} \right)^k}{V_k} \leq 0.69^k \cdot k^{k / 2},
\]
for sufficiently large $k$.

Now, for $0 \leq i < \ell$, let $X_i$ be a set containing one endpoint of each path in the path family $P_i$, so $|X_i| = |X| - |P_i|$.
Then $|\u - \vv| > a_i\sqrt{k}$ for any distinct $\u, \vv \in X_i$, and $|P_{i + 1} \setminus P_i| \leq |P| - |P_i| = (|X| - 1) - (|X| - |X_i|) < |X_i|$.
By Lemma~\ref{lm:0.999}, we have $|X_i| \cdot a_i^k \leq 0.999^k$.
Then
\[
\sum_{e \in P_{i + 1} \setminus P_i} |e|^k < |X_i| \cdot \left( a_{i + 1}\sqrt{k} \right)^k  = |X_i| \cdot \left( a_i\sqrt{k} \right)^k \cdot \left( 1 + \frac{1}{k} \right)^k \leq \mathrm{e} \cdot 0.999^k \cdot k^{k / 2},
\]
where $\mathrm{e}=2.718..$ is Euler's number.
Thus
\[
\sum_{e \in P_\ell} |e|^k = \sum_{e \in P_0} |e|^k + \sum_{i = 1}^{\ell - 1} \sum_{e \in P_{i + 1} \setminus P_i} |e|^k < 0.69^k \cdot k^{k / 2} + \mathrm{e} \ell \cdot 0.999^k \cdot k^{k / 2} = o_k(k^{k / 2})
\]
for sufficiently large $k$ since $\ell = O(k \log k)$.

Since for any two endpoints $\u, \vv$ of different paths in $P_\ell$ we have $|\u - \vv| > a_\ell\sqrt{k} \geq 0.9\sqrt{k} > \sqrt{\frac{2k}{3}}$,
by Lemma~\ref{lm:3-points}, $P_\ell$ consists of at most two paths, i.e. $|P \setminus P_\ell| \leq 1$, so $P$ is the required Hamiltonian path.
\end{proof}

\section{Concluding remarks}
\label{sec:concl}

Theorems~\ref{thm:main} and~\ref{thm:HC-exact} are non-constructive and do not give a computationally efficient way of finding the Hamiltonian cycle in question.
To obtain a constructive result, we can combine these theorems with an approximation algorithm.

A complete graph on a vertex set $X$ with edge weights $w(a, b)$, $a, b \in X$, satisfies \textit{the $\tau$-relaxed triangle inequality} for $\tau \geq 1$ if $w(a, c) \leq \tau \cdot (w(a, b) + w(b, c))$ for all $a, b, c \in X$.
Bender and Chekuri~\cite[Theorem~1]{benderPerformanceGuaranteesTSP2000} gave a polynomial-time $4\tau$-approximation algorithm for the travelling salesman problem in graphs satisfying the $\tau$-relaxed triangle inequality.
By H{\"o}lder's inequality, for any $\a, \b, \c \in \R^k$ we have
\[
|\a - \c|^k \leq \left( |\a - \b| + |\a - \c| \right)^k \leq \left( 1^\frac{k}{k - 1} + 1^\frac{k}{k - 1} \right)^{k - 1} \left( |\a - \b|^k + |\b - \c|^k \right) = 2^{k - 1} \left( |\a - \b|^k + |\b - \c|^k \right).
\]
In other words, for a finite set $X \subseteq \R^k$, the complete graph $K(X)$ with edge weights $w(\a, \b) := |\a - \b|^k$, $\a, \b \in X$, satisfies the $2^{k - 1}$-relaxed triangle inequality, which implies the following corollary.

\begin{corollary}\label{cor:HC-alg}
For any $X \subseteq [0, 1]^k$, $|X| = n$, one can construct a Hamiltonian cycle $H$ on $X$ in time polynomial in $n$ such that $S_k(H) \leq 2^{k + 2} \cdot \mathrm{e}(k + 1) \cdot k^{k / 2}$, or $S_k(H) \leq 2^{k + 2} \cdot k^{k / 2} \cdot (1 + o_k(1))$.
\end{corollary}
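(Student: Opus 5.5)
The plan is to combine the existence results (Theorem~\ref{thm:HC-exact}\ref{HC-exact:centroid} and Theorem~\ref{thm:main}) with the Bender--Chekuri algorithm, applied to the complete graph $K(X)$ with edge weights $w(\a,\b) := |\a-\b|^k$. The argument has three steps.

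\emph{Step 1: the weights satisfy a relaxed triangle inequality.} By the H\"older computation displayed just before the statement, these weights satisfy the $\tau$-relaxed triangle inequality with $\tau = 2^{k-1}$.

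The displayed chain contains a typo: the first step should read $|\a-\c|^k \le (|\a-\b|+|\b-\c|)^k$, which is the ordinary triangle inequality. The correct derivation is then
\[
(u+v)^k \le 2^{k-1}(u^k+v^k),
\]
which is power-mean or convexity of $s\mapsto s^k$. It gives $w(\a,\c)\le 2^{k-1}\bigl(w(\a,\b)+w(\b,\c)\bigr)$.

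\emph{Step 2: run the approximation algorithm.} By~\cite[Theorem~1]{benderPerformanceGuaranteesTSP2000}, there is a polynomial-time algorithm that returns a Hamiltonian cycle $H$ with
\[
S_k(H) \le 4\tau \cdot \mathrm{OPT} = 2^{k+1}\cdot \mathrm{OPT},
\]
where $\mathrm{OPT} = \min_{H'} S_k(H') = \SHC(X)$. The edge weights can be computed in polynomial time (or to sufficient precision; I take exact arithmetic on reals as the model, as is standard).

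\emph{Step 3: bound $\mathrm{OPT}$ by the existence theorems.} Theorem~\ref{thm:HC-exact}\ref{HC-exact:centroid} gives $\mathrm{OPT}\le 2\mathrm{e}(k+1)\cdot k^{k/2}$. Hence
\[
S_k(H)\le 2^{k+1}\cdot 2\mathrm{e}(k+1)\cdot k^{k/2} = 2^{k+2}\cdot\mathrm{e}(k+1)\cdot k^{k/2}.
\]
Similarly, Theorem~\ref{thm:main} gives $\mathrm{OPT}\le k^{k/2}(2+o_k(1))$, so
\[
S_k(H)\le 2^{k+1}\cdot k^{k/2}(2+o_k(1)) = 2^{k+2}\cdot k^{k/2}(1+o_k(1)).
\]

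The bounds are non-constructive, but that is harmless: they are only used to bound $\mathrm{OPT}$, not to produce the cycle.

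There is no real obstacle here. The only care needed is correctly quoting the Bender--Chekuri guarantee $4\tau$ and handling the typo in the H\"older display. The $o_k(1)$ statement is uniform in $X$ because Theorem~\ref{thm:main} bounds $\SHC$ uniformly.
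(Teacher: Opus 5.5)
Your proposal is correct and follows the paper's argument: the $2^{k-1}$-relaxed triangle inequality for $w(\a,\b)=|\a-\b|^k$, the Bender--Chekuri $4\tau = 2^{k+1}$ approximation guarantee, and Theorems~\ref{thm:HC-exact} and~\ref{thm:main} to bound the optimum. You are also right that the first step of the paper's H\"older display should read $|\a-\c|^k \le (|\a-\b|+|\b-\c|)^k$.
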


In the conclusion of their paper~\cite{baloghTravelingSalesmanProblem2024}, Balogh, Clemen and Dumitrescu proposed a variant of the problem with a different condition in place of $X \subseteq [0, 1]^k$: they asked to determine the extremal value of $\SHC(X)$ over all finite sets $X \subseteq \R^k$ with $\diam X \leq 1$.
They noted that the set of $k + 1$ vertices $X_k$ of the unit simplex in $\R^k$ satisfies $\diam X_k = 1$ and $\SHC(X_k) = k + 1$, and asked if this is optimal.
With present tools, we can prove the following upper bound.

\begin{theorem}\label{thm:diam}
For any finite set $X \subseteq \R^k$ with $\diam X \leq 1$, there exists a Hamiltonian cycle $H$ on $X$ with 
\[
S_k(H) \leq 2 \mathrm{e} \left( \frac{2k}{k + 1} \right)^{k / 2} \cdot (k + 1) \leq 2 \mathrm{e} \cdot 2^{k / 2} \cdot (k + 1),
\]
where $\mathrm{e} = 2.718..$ is Euler's number.
\end{theorem}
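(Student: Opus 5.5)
Take $H$ to be a Hamiltonian cycle on $X$ minimizing $S_2(H)$, and let $t$ be a positive integer to be chosen later. Set $\alpha = \sqrt{\frac{t}{4(t+1)}}$. By Proposition~\ref{prop:tsp-centroid}, the family $\mathfrak{B}_\alpha^H$ is a $(2t+1)$-fold packing, and hence so is $\mathfrak{hB}_\alpha^H$. What remains is to replace the cube-based covering ball from Lemma~\ref{lm:hb-pack} by one adapted to the diameter condition.

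Jung's theorem says that any set of diameter at most $1$ in $\R^k$ lies in a closed ball of radius $R = \sqrt{\frac{k}{2(k+1)}}$. After a translation, we may therefore assume $|\a| \leq R$ for every $\a \in X$. Since $\alpha \leq \frac12$, Lemma~\ref{lm:hb-cover} gives $\hB_\alpha^e \subseteq \B_R(\fzero)$ for every $e \in H$. Comparing volumes exactly as in the proof of Lemma~\ref{lm:hb-pack}:
\[
\sum_{e\in H} \frac{V_k}{2}\alpha^k |e|^k \leq (2t+1) V_k R^k,
\]
so
\[
S_k(H) \leq \frac{2(2t+1)}{(2\alpha)^k}(2R)^k = 2(2t+1)\left(1+\frac1t\right)^{k/2}\left(\frac{2k}{k+1}\right)^{k/2}.
\]

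Now choose $t$ by Lemma~\ref{lm:tk-bound}, so that $(2t+1)(1+\frac1t)^{k/2} \leq (k+1)\mathrm{e}$. This gives the first inequality of the theorem. The second follows from $\frac{2k}{k+1} \leq 2$.

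The only step requiring care is invoking Jung's theorem, which is a classical external result rather than one proved in the paper, with the correct radius $\sqrt{k/(2(k+1))}$. Note also that Lemma~\ref{lm:hb-cover} requires only $|\a|,|\b| \leq r$, which Jung's theorem supplies. Everything else is a direct reuse of Section~\ref{sec:HC}.
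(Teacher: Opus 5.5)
Your proposal is correct and follows the paper's proof essentially step for step. Both take an $S_2$-minimal cycle, get a $(2t+1)$-fold packing from Proposition~\ref{prop:tsp-centroid}, and use Jung's theorem with Lemma~\ref{lm:hb-cover} to replace the cube bound in the volume comparison. Both then finish by choosing $t$ via Lemma~\ref{lm:tk-bound}.
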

\begin{proof}
According to Jung's theorem~\cite{jung1901ueber} (see~\cite[Theorem~3.3]{gruberConvexDiscreteGeometry2007} for a more contemporary treatment), there exists a closed ball of radius $r := \sqrt{\frac{k}{2(k + 1)}}$ that contains $X$.
Then, after a parallel translation, we may assume that $|\a| \leq r$ for any $\a \in X$.
Let $H$ be a Hamiltonian cycle on $X$ which minimizes $S_2(H)$, and let $\alpha = \sqrt{\frac{t}{4(t + 1)}}$, where $t$ is given by Lemma~\ref{lm:tk-bound}, thus
\[
\frac{2t + 1}{\alpha^k} \leq (k + 1) \mathrm{e} \cdot 2^k.
\]
By Proposition~\ref{prop:tsp-centroid}, $\mathfrak{B}_\alpha^H$ is a $(2t + 1)$-fold packing, then so is $\mathfrak{hB}_\alpha^H$.
Due to Lemma~\ref{lm:hb-cover}, $\hB_\alpha^e \subseteq \B_r(\fzero)$ for each edge $e \in H$.
Denoting by $V_k$ the volume of the $k$-dimensional unit ball, we have
\[
\sum_{e \in H} \frac{V_k}{2}\alpha^k|e|^k \leq (2t + 1) \cdot V_k r^k, \quad\text{so}\quad S_k(H) \leq 2 \cdot \frac{2t + 1}{\alpha^k} \cdot r^k \leq 2\mathrm{e}(k + 1) \cdot \left(  \frac{2k}{k + 1} \right)^{k / 2}.\qedhere
\]
\end{proof}

\section*{Acknowledgements}

Alexey Gordeev is a grantee of the Momentum MSCA Programme, which has been co-funded by the European Commission through the HORIZON-MSCA-2023-COFUND programme and the Secretariat of the Hungarian Academy of Sciences (MTA).

\bibliographystyle{abbrv}
\bibliography{main}

\noindent\textit{Email address:} \texttt{gordeev.aleksei@renyi.hu}\\
Department of Mathematics and Mathematical Statistics, Ume\r{a} University, Universitetstorget 4, 901 87 Ume\r{a}, Sweden;\\
HUN-REN Alfr\'ed R\'enyi Institute of Mathematics, Re\'altanoda utca 13--15., H-1053 Budapest,  Hungary;\\
MTA--HUN-REN RI Lend\"ulet ``Momentum'' Arithmetic Combinatorics Research Group, Re\'altanoda utca 13--15., H-1053 Budapest,  Hungary.

\newpage

\appendix
\counterwithin{figure}{section}
\counterwithin{table}{section}

\clearpage
\section{Alternative exact upper bounds}\label{ap:alt}

In this section, we prove an exact upper bound on $\SHC$ using properties of spherical codes, i.e. Lemma~\ref{lm:sph-code-bound}, and an exact upper bound on $\SPM$ using properties of centroids, i.e. Lemma~\ref{lm:n-points-dist}.
The resulting Theorems~\ref{thm:HC-exact-alt} and~\ref{thm:PM-exact-alt} are slightly weaker than, respectively, Theorems~\ref{thm:HC-exact} and~\ref{thm:PM-exact}.

\begin{proposition}\label{prop:tsp-obtuse}
Let $X \subseteq \R^k$ be a finite set, and let $H$ be a Hamiltonian cycle on $X$ with minimal possible $S_2(H)$.
Then $\mathfrak{B}_{1/2}^H$ is a $3(k + 1)$-fold packing.
\end{proposition}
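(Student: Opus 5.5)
Fix a point $\x \in \R^k$ and suppose it lies in the balls $\B_{1/2}^{e}$ for $n$ distinct edges of $H$. We must show $n \leq 3(k+1)$, so assume $n \geq 3k+4$ and derive a contradiction. The plan is to extract from these edges a subfamily of $k+2$ edges whose midpoints, viewed from $\x$, pairwise form obtuse angles. This contradicts Lemma~\ref{lm:sph-code-bound}, exactly as in the proof of Proposition~\ref{prop:match-obtuse}.

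\textbf{Pairwise condition.} First we record the criterion for a pair to be good. For two edges $e, f$ containing $\x$ in their balls, we have $|\m_e - \x|^2 < |e|^2/4$ and $|\m_f - \x|^2 < |f|^2/4$. Hence if
\[
|\m_e - \m_f|^2 \geq \frac{|e|^2 + |f|^2}{4},
\]
the polarization identity gives $\langle \m_e - \x, \m_f - \x \rangle < 0$. Call such a pair \emph{good}. By Lemma~\ref{lm:HC-4-edges}, for any four of our edges in cyclic order $e_1, e_2, e_3, e_4$, at least one of the pairs $\{e_1, e_3\}$ and $\{e_2, e_4\}$ is good.

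\textbf{Selection.} Number the $n$ edges $g_1, \dots, g_n$ in their cyclic order on $H$. I would pass to every third edge, forming the three classes $\{g_i : i \equiv r \pmod 3\}$, so some class has at least $k+2$ edges. The hope is to show that within one class, any two edges form a good pair.

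\textbf{Main obstacle.} The difficulty is that Lemma~\ref{lm:HC-4-edges} only guarantees one of two ``crossing'' pairs is good, so it does not make a fixed pair good directly. The key step is therefore a combinatorial argument on the cyclic sequence. Call a pair $\{g_i, g_j\}$ bad if it is not good. Take two bad pairs that interleave in the cyclic order, i.e.\ $g_a, g_b, g_c, g_d$ in cyclic order with $\{g_a, g_c\}$ and $\{g_b, g_d\}$ both bad. This contradicts Lemma~\ref{lm:HC-4-edges}. So the bad pairs, drawn as chords of an $n$-gon, are pairwise non-crossing: they form an outerplanar graph.

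An outerplanar graph is 3-colourable, since it is 2-degenerate. Its colour classes are independent sets, meaning sets of pairwise good edges. One class has size at least $\lceil n/3 \rceil \geq k+2$.

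\textbf{Conclusion.} Normalizing the vectors $\m_{g} - \x$ over this class gives a $(k, k+2, s)$-spherical code with $s < 0$. Here $\m_g \neq \x$, since otherwise the inner product would be $0$ rather than negative. This contradicts Lemma~\ref{lm:sph-code-bound}. Hence $n \leq 3(k+1)$, and $\mathfrak{B}_{1/2}^H$ is a $3(k+1)$-fold packing.

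The step I expect to be most delicate is justifying the non-crossing property. One has to check that Lemma~\ref{lm:HC-4-edges} applies to any four distinct edges in cyclic order, including adjacent edges of $H$; it does, since its paths $P_{\x\y}$ may be empty. One also needs that chords sharing an endpoint do not count as crossing, which the outerplanarity argument tolerates.
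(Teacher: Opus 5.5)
Your proof is correct and is essentially the paper's own argument: by Lemma~\ref{lm:HC-4-edges} the ``bad'' chords on the $n$-gon never cross, so the graph is outerplanar and hence 3-colourable. A colour class of size at least $\lceil n/3\rceil$ then gives a spherical code with negative dot products, contradicting Lemma~\ref{lm:sph-code-bound}. The paper builds the auxiliary graph from pairs with nonnegative dot product rather than from your bad pairs, which is only a cosmetic difference. Your ``every third edge'' selection paragraph is never used and should simply be deleted.
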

\begin{proof}
Fix any point $\x \in \R^k$ and let it be contained in the ball $\B_{1/2}^{e_i}$ for $n$ distinct edges appearing in the cycle in the order $e_1, \dots, e_n \in H$.
Denoting $\m_i := \m_{e_i}$, we have $|\m_i - \x|^2 < |e_i|^2 / 4$ for $1 \leq i \leq n$.
By Lemma~\ref{lm:HC-4-edges}, for any $1 \leq i_1 < i_2 < i_3 < i_4 \leq n$ we have
\[
| \m_{i_1} - \m_{i_3} |^2 + | \m_{i_2} - \m_{i_4} |^2 \geq \frac{|e_{i_1}|^2 + |e_{i_2}|^2 + |e_{i_3}|^2 + |e_{i_4}|^2}{4} > |\m_{i_1} - \x|^2 + |\m_{i_2} - \x|^2 + |\m_{i_3} - \x|^2 + |\m_{i_4} - \x|^2.
\]
Thus, either $|\m_{i_1} - \m_{i_3}|^2 > |\m_{i_1} - \x|^2 + |\m_{i_3} - \x|^2$, and so
\[
\langle \m_{i_1} - \x, \m_{i_3} - \x \rangle = \frac{|\m_{i_1} - \x|^2 + |\m_{i_3} - \x|^2 - |(\m_{i_1} - \x) - (\m_{i_3} - \x)|^2}{2} < 0,
\]
or, similarly, $\langle \m_{i_2} - \x, \m_{i_4} - \x \rangle < 0$.

Consider an auxiliary graph $G$ with the vertex set $\{1, \dots, n\}$ and an edge between $i \neq j$ whenever $\langle \m_i - \x, \m_j - \x \rangle \geq 0$.
Draw this graph on the plane, placing vertices at the corners of a regular $n$-sided polygon, and drawing edges of $G$ as segments.
We have shown that for any $i_1 < i_2 < i_3 < i_4$ at most one of $i_1i_3$ and $i_2i_4$ is an edge of $G$, which implies that the drawing has no crossing edges.
Thus $G$ has a planar drawing with all vertices belonging to the outer face, i.e. $G$ is \textit{outerplanar}.
Then (see, for example, Proskurowski and Sys{\l}o~\cite[Theorem 1]{proskurowskiEfficientVertexEdgeColoring1986}) vertices of $G$ can be colored with three colors such that no edge has endpoints of the same color.
Taking the largest color class, we obtain a subset $I \subseteq \{1, \dots, n\}$, $|I| \geq \lceil \frac{n}{3} \rceil$, such that for any $i, j \in I$, $i \neq j$, $ij$ is not an edge of $G$, that is, $\langle \m_i - \x, \m_j - \x \rangle < 0$.
Then the set $\{\frac{\m_i - \x}{|\m_i - \x|} \,:\,i \in I\}$ is a $(k, |I|, s)$ spherical code for some $s < 0$, thus, due to Lemma~\ref{lm:sph-code-bound}, $\lceil \frac{n}{3} \rceil \leq |I| \leq k + 1$, i.e. $n \leq 3(k + 1)$.
\end{proof}

\begin{theorem}\label{thm:HC-exact-alt}
For any finite set $X \subseteq [0, 1]^k$, there exists a Hamiltonian cycle $H$ on $X$ such that $S_k(H) \leq 6(k + 1) \cdot k^{k / 2}$.
In other words,
\[
\SHC \leq 6(k + 1) \cdot k^{k / 2}.
\]
\end{theorem}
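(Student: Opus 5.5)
The plan mirrors the proof of Theorem~\ref{thm:PM-exact}, with Proposition~\ref{prop:tsp-obtuse} in place of Proposition~\ref{prop:match-obtuse}.

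First, apply a parallel translation so that we may assume $X \subseteq \left[-\frac{1}{2}, \frac{1}{2}\right]^k$. This changes neither edge lengths nor the set of Hamiltonian cycles. If $|X| \leq 2$, the statement is immediate: for $|X|=2$ the cycle consists of the same edge counted twice, giving $S_k(H) \leq 2k^{k/2}$. So assume $|X| \geq 3$.

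Next, take a Hamiltonian cycle $H$ on $X$ that minimizes $S_2(H)$; it exists because there are finitely many Hamiltonian cycles. By Proposition~\ref{prop:tsp-obtuse}, the family $\mathfrak{B}_{1/2}^H$ is a $3(k+1)$-fold packing. Since $\hB_{1/2}^e \subseteq \B_{1/2}^e$ for every edge $e$, the family $\mathfrak{hB}_{1/2}^H$ is also a $3(k+1)$-fold packing.

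Finally, apply Lemma~\ref{lm:hb-pack} with $E = H$, $t = 3(k+1)$ and $\alpha = \frac{1}{2}$; this $\alpha$ is allowed since the lemma requires $\alpha \le \frac12$. We obtain
\[
S_k(H) \leq \frac{2 \cdot 3(k+1)}{(2 \cdot \frac{1}{2})^k} \cdot k^{k/2} = 6(k+1) \cdot k^{k/2},
\]
which is the claimed bound. Taking the supremum over all finite $X \subseteq [0,1]^k$ gives $\SHC \leq 6(k+1) \cdot k^{k/2}$.

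There is no real obstacle here, because the geometric difficulty is already handled in Proposition~\ref{prop:tsp-obtuse}, via the outerplanarity and three-colouring argument combined with Rankin's bound. The only point needing care is that Lemma~\ref{lm:hb-pack} requires the points to lie in the centered cube $\left[-\frac12,\frac12\right]^k$. This is exactly what makes Lemma~\ref{lm:hb-cover} applicable with $r = \sqrt{k}/2$, and the initial translation takes care of it.
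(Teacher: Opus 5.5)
Your proposal is correct and follows the paper's proof step for step. You translate into $\left[-\frac12,\frac12\right]^k$, take an $S_2$-minimal cycle, invoke Proposition~\ref{prop:tsp-obtuse}, pass from balls to half-balls, and apply Lemma~\ref{lm:hb-pack} with $t = 3(k+1)$ and $\alpha = \frac12$. Your separate treatment of $|X| \leq 2$ is a harmless extra.
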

\begin{proof}
After a parallel translation, we may assume that $X \subseteq \left[ -\frac{1}{2}, \frac{1}{2} \right]^k$.
Let $H$ be a Hamiltonian cycle on $X$ which minimizes $S_2(H)$.
By Proposition~\ref{prop:tsp-obtuse}, $\mathfrak{B}_{1/2}^H$ is a $3(k + 1)$-fold packing, then so is $\mathfrak{hB}_{1/2}^H$.
Applying Lemma~\ref{lm:hb-pack} with $E = H$ and $\alpha = \frac{1}{2}$, we get
\[
S_k(H) \leq \frac{2 \cdot 3(k + 1)}{\left( 2 \cdot \frac{1}{2} \right)^k} \cdot k^{k / 2} = 6(k + 1) \cdot k^{k / 2}. \qedhere
\]
\end{proof}

\begin{proposition}\label{prop:match-centroid}
Let $X \subseteq \R^k$ be a set of even size, $M$ be a perfect matching on $X$ with minimal possible $S_2(M)$, $t \geq 1$ and $\alpha := \sqrt{\frac{t}{4(t + 1)}}$.
Then $\mathfrak{B}_\alpha^M$ is a $t$-fold packing.
\end{proposition}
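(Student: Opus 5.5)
Fix a point $\x \in \R^k$ and suppose it lies in the balls $\B_\alpha^{e_1}, \dots, \B_\alpha^{e_n}$ for $n$ distinct edges $e_1, \dots, e_n \in M$. It suffices to show $n \leq t$, i.e. that no point lies in $t+1$ of these balls. Membership in the balls gives the upper bound
\[
\sum_{i=1}^n |\m_{e_i} - \x|^2 < \alpha^2 \sum_{i=1}^n |e_i|^2 = \frac{t}{4(t+1)} \sum_{i=1}^n |e_i|^2.
\]

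For the lower bound, apply Lemma~\ref{lm:n-points-dist} to the midpoints $\m_{e_1}, \dots, \m_{e_n}$. This gives $\sum_i |\m_{e_i} - \x|^2 \geq \frac{1}{n}\sum_{i<j} |\m_{e_i} - \m_{e_j}|^2$. Next, Lemma~\ref{lm:match-key} bounds each pairwise term below by $\frac{|e_i|^2 + |e_j|^2}{4}$. Each $|e_i|^2$ appears in exactly $n-1$ pairs, so
\[
\sum_{i=1}^n |\m_{e_i} - \x|^2 \geq \frac{n-1}{4n} \sum_{i=1}^n |e_i|^2.
\]

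Comparing the two bounds requires $\sum_i |e_i|^2 > 0$. This holds because $M$ is a matching on a set of distinct points, so every edge has positive length. The comparison then gives $\frac{1}{4} - \frac{1}{4n} < \frac{1}{4} - \frac{1}{4(t+1)}$, hence $n < t+1$, i.e. $n \leq t$. So every point of $\R^k$ lies in at most $t$ of the open balls, and $\mathfrak{B}_\alpha^M$ is a $t$-fold packing.

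The case $n=1$ needs no separate treatment, since the claim $n \le t$ is trivial there. There is no real obstacle: the argument is the proof of Proposition~\ref{prop:tsp-centroid} with Lemma~\ref{lm:match-key} in place of Lemma~\ref{lm:HC-4-edges}. Since Lemma~\ref{lm:match-key} applies to every pair of edges rather than only to alternating pairs among four, there is no factor $2$, and the multiplicity improves from $2t+1$ to $t$.
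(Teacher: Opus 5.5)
Your proof is correct and follows the paper's argument for this proposition step for step: the ball-membership upper bound, then Lemma~\ref{lm:n-points-dist} on the midpoints and Lemma~\ref{lm:match-key} on each pair, giving $\frac{n-1}{4n} < \frac{t}{4(t+1)}$ and hence $n \le t$. Your positivity remark on $\sum_i |e_i|^2$ is a harmless extra; it also follows automatically, since the inequality $\frac{n-1}{4n}S < \frac{t}{4(t+1)}S$ with $S = \sum_i |e_i|^2$ would read $0<0$ if $S=0$.
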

\begin{proof}
Fix any point $\x \in \R^k$ and let it be contained in the ball $\B_{\alpha}^{e_i}$ for $n$ distinct edges $e_1, \dots, e_n \in M$.
Then
\[
\sum_{i = 1}^n |\m_{e_i} - \x|^2 < \sum_{i = 1}^n\alpha^2 |e_i|^2 = \frac{t}{4(t + 1)} \sum_{i = 1}^n |e_i|^2.
\]
On the other hand, using Lemma~\ref{lm:n-points-dist} and then Lemma~\ref{lm:match-key}, we have
\[
\sum_{i = 1}^n |\m_{e_i} - \x|^2 \geq \frac{1}{n} \sum_{i < j} |\m_{e_i} - \m_{e_j}|^2 \geq \sum_{i < j} \frac{|e_i|^2 + |e_j|^2}{4n} = \frac{n - 1}{4n} \sum_{i = 1}^n |e_i|^2.
\]
It follows that $\frac{1}{4} - \frac{1}{4n} = \frac{n - 1}{4n} < \frac{t}{4(t + 1)} = \frac{1}{4} - \frac{1}{4(t + 1)}$, which implies $n < t + 1$, i.e. $n \leq t$.
\end{proof}

\begin{theorem}\label{thm:PM-exact-alt}
For any integer $k \geq 2$ and any set $X \subseteq [0, 1]^k$ of even size, there exists a perfect matching $M$ on $X$ such that
\begin{enumerate}[(a)]
\item\label{PM-exact-alt:t} for any integer $t \geq 1$, $S_k(M) \leq 2t \cdot \left( 1 + \frac{1}{t} \right)^{k / 2} \cdot k^{k / 2}$;
\item\label{PM-exact-alt:centroid} $S_k(M) \leq k\mathrm{e} \cdot k^{k / 2}$, where $\mathrm{e}=2.718..$ is Euler's number;
\end{enumerate}
In particular,
\[
\SPM \leq k\mathrm{e} \cdot k^{k / 2}.
\]
\end{theorem}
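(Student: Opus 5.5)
Part~\ref{PM-exact-alt:t} follows the proof of Theorem~\ref{thm:HC-exact}\ref{HC-exact:t}, with Proposition~\ref{prop:match-centroid} in place of Proposition~\ref{prop:tsp-centroid}. After a parallel translation we may assume $X \subseteq \left[-\frac12,\frac12\right]^k$. Let $M$ be a perfect matching on $X$ minimizing $S_2(M)$, fix an integer $t \geq 1$, and set $\alpha = \sqrt{\frac{t}{4(t+1)}}$. Note that $\alpha \leq \frac12$. By Proposition~\ref{prop:match-centroid}, $\mathfrak{B}_\alpha^M$ is a $t$-fold packing, and hence so is $\mathfrak{hB}_\alpha^M$. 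Lemma~\ref{lm:hb-pack} then gives
\[
S_k(M) \leq \frac{2t}{(2\alpha)^k}\, k^{k/2} = 2t\left(1+\frac1t\right)^{k/2} k^{k/2}.
\]
The matching $M$ does not depend on $t$, so a single $M$ satisfies both parts at once.

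For part~\ref{PM-exact-alt:centroid} it remains to choose $t$, depending on $k$, with $t\left(1+\frac1t\right)^{k/2} \leq \frac{k\mathrm{e}}{2}$. This is the analogue of Lemma~\ref{lm:tk-bound}.

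If $k$ is even, take $t = k/2$. Then $t\left(1+\frac1t\right)^{t} \leq t\mathrm{e} = \frac{k\mathrm{e}}{2}$, using $\left(1+\frac1n\right)^n \leq \mathrm{e}$.

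If $k \geq 3$ is odd, take $t = \frac{k-1}{2} \geq 1$. This is the only place where the hypothesis $k \geq 2$ is used: for $k=1$ this choice would give $t=0$. The required inequality becomes
\[
t\left(1+\frac1t\right)^{t+\frac12} \leq \left(t+\frac12\right)\mathrm{e},
\quad\text{i.e.}\quad
\left(1+\frac1t\right)^{t}\sqrt{1+\frac1t} \leq \mathrm{e}\left(1+\frac1{2t}\right).
\]
I would use the same estimate as in Lemma~\ref{lm:tk-bound}, namely $\left(1+\frac1t\right)^t \leq \mathrm{e}\left(1-\frac1{2t}+\frac1{2t^2}\right)$, together with $\sqrt{1+\frac1t} \leq 1+\frac1{2t}$. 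It then suffices that $1-\frac1{2t}+\frac1{2t^2} \leq 1$, which holds for every $t \geq 1$.

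I do not expect a genuine obstacle. The only delicate point is the odd case: the exponent $t+\frac12$ slightly exceeds $t$, so the crude bound $\left(1+\frac1t\right)^t \leq \mathrm{e}$ is not enough on its own. The second-order refinement above absorbs the extra factor $\sqrt{1+1/t}$. Combining part~\ref{PM-exact-alt:t} with this choice of $t$ gives $S_k(M) \leq k\mathrm{e}\cdot k^{k/2}$, and hence $\SPM \leq k\mathrm{e}\cdot k^{k/2}$.
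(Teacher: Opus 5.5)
Your proposal is correct and follows the paper's proof essentially verbatim: part (a) uses Proposition~\ref{prop:match-centroid} with Lemma~\ref{lm:hb-pack} on an $S_2$-minimal matching, and part (b) takes $t=k/2$ for even $k$ and $k=2t+1$ for odd $k$. In the odd case, however, the second-order estimate is superfluous, and you discard it anyway when you bound $1-\frac{1}{2t}+\frac{1}{2t^2}\le 1$: the crude bound $\left(1+\frac1t\right)^t\le \mathrm{e}$ together with $\sqrt{1+\frac1t}\le 1+\frac{1}{2t}$ already suffices, and this pair is the paper's AM--GM step $2\sqrt{t(t+1)}\le 2t+1=k$, so your remark that the crude bound is not enough is mistaken.
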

\begin{proof}\leavevmode
\begin{enumerate}[(a)]
\item After a parallel translation, we may assume that $X \subseteq \left[ -\frac{1}{2}, \frac{1}{2} \right]^k$.
Let $M$ be a perfect matching on $X$ which minimizes $S_2(M)$ and let $\alpha = \sqrt{\frac{t}{4(t + 1)}}$.
Due to Proposition~\ref{prop:match-centroid}, $\mathfrak{B}_\alpha^M$ is a $t$-fold packing, then so is $\mathfrak{hB}_\alpha^M$.
Applying Lemma~\ref{lm:hb-pack} with $E = M$, we get
\[
S_k(M) \leq \frac{2t}{\left( 2 \cdot \sqrt{\frac{t}{4(t + 1)}} \right)^k} \cdot k^{k / 2} = 2t \cdot \left( 1 + \frac{1}{t} \right)^{k / 2} \cdot k^{k / 2}.
\]
\item If $k$ is even, then, using item~\ref{PM-exact-alt:t} with $t = k / 2$ and the inequality $(1 + \frac{1}{n})^n \leq \mathrm{e}$,
\[
S_k(M) \leq  2 \cdot \frac{k}{2} \cdot \left( 1 + \frac{1}{t} \right)^t \cdot k^{k / 2} \leq k\mathrm{e}\cdot k^{k / 2}.
\]
If $k$ is odd, then, using item~\ref{PM-exact-alt:t} with $t$ such that $k = 2t + 1$,
\[
S_k(M) \leq 2t \cdot\left( 1 + \frac{1}{t} \right)^{t + \frac{1}{2}} \cdot k^{k / 2} = 2 \sqrt{t(t + 1)} \left( 1 + \frac{1}{t} \right)^t \cdot k^{k / 2} \leq 2\sqrt{t(t + 1)} \cdot \mathrm{e} \cdot k^{k / 2}.
\]
Combining this with the inequality $2\sqrt{t(t + 1)} \leq 2\frac{t + (t + 1)}{2} = k$, we again have $S_k(M) \leq k\mathrm{e} \cdot k^{k / 2}$.\qedhere
\end{enumerate}
\end{proof}

\end{document}

%% file: Figures/exNewman.tex
\tikzset{cubevert/.style={circle,draw,fill,inner sep=0,minimum size=0.005cm}, vertex/.style={circle,draw=myred,fill=myred,inner sep=0pt,minimum size=0.1cm}, every edge/.style={draw}}
\begin{tabular}{c c c c c c c}
\begin{tikzpicture}
    \node[cubevert] (00) at (0,0) {};
    \node[cubevert] (10) at (3,0) {};
    \node[cubevert] (01) at (0,3) {};
    \node[cubevert] (11) at (3,3) {};
    \draw (00) edge[loosely dashed] (01)
          (10) edge[loosely dashed] (11)
          (00) edge[loosely dashed] (10)
          (01) edge[loosely dashed] (11);

  \node[vertex] (a) at (3,3) {};
  \node[vertex] (b) at (0,0) {};

  \draw (a.200) edge[myblue,thick] node[above,sloped,black]{\small $\sqrt{2}$} (b.70)
        (b.20) edge[myblue,thick] (a.250);
\end{tikzpicture}
& &
\begin{tikzpicture}
    \node[cubevert] (00) at (0,0) {};
    \node[cubevert] (01) at (0,3) {};
    \node[cubevert] (11) at (3,3) {};
    \draw (00) edge[loosely dashed] (01)
          (01) edge[loosely dashed] (11);

  \node[vertex] (a) at (3,3) {};
  \node[vertex] (b) at (3,0) {};
  \node[vertex] (c) at (0,0) {};
  
  \draw (a) edge[myblue,thick] node[left,black]{\small $1$} (b)
        (b) edge[myblue,thick] node[above,black]{\small $1$} (c)
        (c) edge[myblue,thick] node[above,sloped,black]{\small $\sqrt{2}$} (a);
\end{tikzpicture}
& &
\begin{tikzpicture}
  \node[vertex] (a) at (3,3) {};
  \node[vertex] (b) at (3,0) {};
  \node[vertex] (c) at (0,0) {};
  \node[vertex] (d) at (0,3) {};
  
  \draw (a) edge[myblue,thick] node[left,black]{\small $1$} (b)
        (b) edge[myblue,thick] node[above,black]{\small $1$} (c)
        (c) edge[myblue,thick] node[right,black]{\small $1$} (d)
        (d) edge[myblue,thick] node[below,black]{\small $1$} (a);
\end{tikzpicture}
& &
\begin{tikzpicture}
    \node[cubevert] (01) at (0,3) {};
    \node[cubevert] (11) at (3,3) {};
    \draw (01) edge[loosely dashed] (11);

  \node[vertex] (a) at (3,3) {};
  \node[vertex] (b) at (3,0) {};
  \node[vertex] (c) at (0,0) {};
  \node[vertex] (d) at (0,3) {};
  \node[vertex] (e) at (1.5,1.5) {};
  
  \draw (a) edge[myblue,thick] node[left,black]{\small $1$} (b)
        (b) edge[myblue,thick] node[above,black]{\small $1$} (c)
        (c) edge[myblue,thick] node[right,black]{\small $1$} (d)
        (d) edge[myblue,thick] node[above,sloped,black]{\small $\frac{\sqrt{2}}{2}$} (e)
        (e) edge[myblue,thick] node[above,sloped,black]{\small $\frac{\sqrt{2}}{2}$} (a);
\end{tikzpicture}\\
\\[-0.2cm]
\small $2 \cdot (\sqrt{2})^2 = 4$ & & \small $(\sqrt{2})^2 + 2 \cdot 1^2 = 4$ & & \small $4 
\cdot 1^2 = 4$ & & \small $2 \cdot \left( \frac{\sqrt{2}}{2} \right)^2 + 3 \cdot 1^2 = 4$
\end{tabular}

%% file: Figures/altHC.tex
\tikzset{vertex/.style={circle,draw=myred,fill=myred,inner sep=0pt,minimum size=0.1cm}, every edge/.style={draw=myblue,thick}}
\begin{tabular}{ccccc}
\begin{tikzpicture}
  \node[vertex,label=left:{$\phantom{|}\a_1$}] (a) at (0,1) {};
  \node[vertex,label=left:{$\phantom{|}\b_1$}] (b) at (0,2) {};
  \node[vertex,label=above:{$\phantom{|}\a_2\phantom{|}$}] (c) at (1,3) {};
  \node[vertex,label=above:{$\phantom{|}\b_2\phantom{|}$}] (d) at (2,3) {};
  \node[vertex,label=right:{$\a_3\phantom{|}$}] (e) at (3,2) {};
  \node[vertex,label=right:{$\b_3\phantom{|}$}] (f) at (3,1) {};
  \node[vertex,label=below:{$\phantom{|}\a_4\phantom{|}$}] (g) at (2,0) {};
  \node[vertex,label=below:{$\phantom{|}\b_4\phantom{|}$}] (h) at (1,0) {};
  
  \draw (a) edge (b)
        (b) edge[bend left,dashed] (c)
        (c) edge (d)
        (d) edge[bend left,dashed] (e)
        (e) edge (f)
        (f) edge[bend left,dashed] (g)
        (g) edge (h)
        (h) edge[bend left,dashed] (a);
\end{tikzpicture}
& &
\begin{tikzpicture}
  \node[vertex,label=left:{$\phantom{|}\a_1$}] (a) at (0,1) {};
  \node[vertex,label=left:{$\phantom{|}\b_1$}] (b) at (0,2) {};
  \node[vertex,label=above:{$\phantom{|}\a_2\phantom{|}$}] (c) at (1,3) {};
  \node[vertex,label=above:{$\phantom{|}\b_2\phantom{|}$}] (d) at (2,3) {};
  \node[vertex,label=right:{$\a_3\phantom{|}$}] (e) at (3,2) {};
  \node[vertex,label=right:{$\b_3\phantom{|}$}] (f) at (3,1) {};
  \node[vertex,label=below:{$\phantom{|}\a_4\phantom{|}$}] (g) at (2,0) {};
  \node[vertex,label=below:{$\phantom{|}\b_4\phantom{|}$}] (h) at (1,0) {};
  
  \draw (b) edge[bend left,dashed] (c)
        (d) edge[bend left,dashed] (e)
        (f) edge[bend left,dashed] (g)
        (h) edge[bend left,dashed] (a)
        (a) edge[black,dotted] (e)
        (d) edge[black,dotted] (g)
        (f) edge[black,dotted] (b)
        (c) edge[black,dotted] (h);
\end{tikzpicture}
& &
\begin{tikzpicture}
  \node[vertex,label=left:{$\phantom{|}\a_1$}] (a) at (0,1) {};
  \node[vertex,label=left:{$\phantom{|}\b_1$}] (b) at (0,2) {};
  \node[vertex,label=above:{$\phantom{|}\a_2\phantom{|}$}] (c) at (1,3) {};
  \node[vertex,label=above:{$\phantom{|}\b_2\phantom{|}$}] (d) at (2,3) {};
  \node[vertex,label=right:{$\a_3\phantom{|}$}] (e) at (3,2) {};
  \node[vertex,label=right:{$\b_3\phantom{|}$}] (f) at (3,1) {};
  \node[vertex,label=below:{$\phantom{|}\a_4\phantom{|}$}] (g) at (2,0) {};
  \node[vertex,label=below:{$\phantom{|}\b_4\phantom{|}$}] (h) at (1,0) {};
  
  \draw (b) edge[bend left,dashed] (c)
        (d) edge[bend left,dashed] (e)
        (f) edge[bend left,dashed] (g)
        (h) edge[bend left,dashed] (a)
        (a) edge[black,dotted] (f)
        (g) edge[black,dotted] (c)
        (b) edge[black,dotted] (e)
        (d) edge[black,dotted] (h);
\end{tikzpicture}\\
\\
$H$ & & $H_1$ & & $H_2$
\end{tabular}